\numberwithin{equation}{section}
\numberwithin{figure}{section}
\theoremstyle{plain}
\newtheorem{thm}{\protect\theoremname}
  \theoremstyle{plain}
  \newtheorem{lem}[thm]{\protect\lemmaname}
  \theoremstyle{plain}
  \newtheorem{cor}[thm]{\protect\corollaryname}
  \theoremstyle{plain}
  \newtheorem{prop}[thm]{\protect\propositionname}
  \providecommand{\corollaryname}{Corollary}
  \providecommand{\lemmaname}{Lemma}
  \providecommand{\propositionname}{Proposition}
\providecommand{\theoremname}{Theorem}
\begin{document}
\begin{abstract}
Infectious diseases are among the most prominent threats to mankind. When preventive health care cannot be provided, a viable means
of disease control is the isolation of individuals, who may be infected.
To study the impact of isolation, we propose a system of Delay Differential
Equations and offer our model analysis based on the geometric theory
of semi-flows. Calibrating the response to an outbreak in terms of 
the fraction of infectious individuals isolated and the speed with which this is done, 
we deduce the minimum response required to curb an incipient 
outbreak, and predict the ensuing endemic state should the infection continue to spread.
\end{abstract}

\title{SIQ: a delay differential equations model for\\
 disease control via isolation}

\date{August 20, 2017}

\author{Stefan Ruschel\textsuperscript{1,2}, Tiago Pereira\textsuperscript{2}, Serhiy Yanchuk\textsuperscript{1}, Lai-Sang Young\textsuperscript{3}}

\maketitle

\textsuperscript{1}Institut für Mathematik, Technische Universität
Berlin,

\textsuperscript{2}Instituto de Ciências Matemáticas e de Computação,
Universidade de São Paulo,

\textsuperscript{3}Courant Institiute of Mathematical Sciences, New
York University.

\medskip
\keywords{\textbf{keywords:}
disease control, isolation, delay differential equations, invariant manifolds}

\section{Introduction}

%Roughly 400 infectious diseases have been identified since 1940.  New pathogens are emerging at higher rates, despite the increase in awareness and 
%vigilance.   A grave public health concern 
%is when and how the next outbreak will occur.
%
%The 1917 Spanish influenza, which killed 50 million people, was the worst-ever
%pandemic on record -- and that was back at a time when travel by ship was the fastest means of transportation around the world.  In today's
%tightly connected world, an epidemic can potentially travel at jet-speed. Indeed, Swine flu was first detected in April of 2009 in Mexico and within a week it turned up in London. 

In the recent outbreaks of swine flu, Sars, bird flu, and Ebola, local health authorities were not prepared to deal with the developing crisis. Reasons vary.  
In the case of Ebola, it took a while to recognize the urgency of the situation and the affected countries lacked the needed infrastructure.
In the case of Sars, the means of transmission was unknown and a vaccine 
was not available. In these situations and others, 
health authorities have recommended the {\it isolation of individuals}, who may be infected \cite{CDC_legal, Siegel2007, centers2014}.
This is only natural: in the absence of other means to curb 
the spreading of a disease, the only way to slow down its propagation is to deny possible infection pathways. Strategies of this kind date back several centuries and 
their usefulness has not diminished with time, as evidenced in recent events \cite{kucharski2015, Donnelly2003}.

In any isolation strategy, early identification of infectious individuals is crucial.
It is also a formidable task. 
Adequate infrastructure  and constant preparedness is costly to maintain; 
infected individuals themselves may fail to recognize the 
potential danger they pose to others, for reasons of their own 
some may choose not to seek medical attention; and coercive measures
can be controversial.
For these and other reasons, it is important for health authorities to properly evaluate 
in advance
the level of response capabilities needed to combat outbreaks, to determine what fraction of the infectious population must be identified, by which means and how quickly \cite{Day2006, Fraser2004, Peak2017}. 
The optimal duration of isolation
is another question not well understood. Can, for example, longer isolation
%ameliorate the risks incurred from
compensate for slower identification?
%of infected individuals

While statistics have been collected and analyzed for a number of specific diseases,
the impact of isolation, 
in particular the human toll caused by failures or delays in its implementation, has not received a great deal of attention \cite{Zuzek2015,Pereira2015}.
These papers used different models to shed light
on the relation between network structure, isolation, and propagation rate, relying on the theory of branching processes to approximate early phases of the infection.
The nonlinear effects of isolation and the prediction of the endemic state when isolation fails were beyond the scopes of these earlier studies.  

This paper contains a theoretical study of the use of isolation to control 
the spreading of infectious diseases, focusing on the consequences of imperfect 
implementation such as failure to identify a fraction of 
the infected hosts and
delays in isolating them from the general public. 
Without limiting ourselves to specific diseases, 
we deduce, based on general disease reproductive characteristics, the minimum 
response required to curb a developing epidemic. When this minimum response 
is not met and the infection becomes endemic, we offer predictions on 
the fraction of the population that can be expected 
to fall ill. We believe an improved understanding of  issues of this kind 
will be of use to health authorities as they assess the costs and
benefits of their policies.

Our study is carried out using a dynamical systems approach. The theory of
nonlinear dynamical systems permits us both to carry out local, linear analyses
and to use global, geometric techniques to study the nonlinear effects of isolation 
and its impact on the eventual endemic state. We started from a network 
in which each node represents
an individual. Under some simplifying assumptions, we derive a system of Delay Differential Equations describing the time course of an infection following an outbreak. This system of differential equations give rise to an infinite dimensional dynamical system that, as we will show, is amenable to detailed mathematical analysis. 
Throughout the paper we give broad biological interpretations of our findings and support them with 
technical results that we believe are of independent mathematical interest.

%To be more precise, we consider the continuum limit, or mean %field approximation, of an isolation strategy in the context %of a susceptible-infected-susceptible network model with %node isolation.

%To stress the novelty 
%of this class of models, namely the isolation -- or %isolation -- of infected individuals,
%we call them {\it SIQ} models. 

%Here we show that this continuum limit of SIQ models 
%is amenable to detailed mathematical analysis. 

%%%%%%%%%%%%%%%%%%%%%%%%%%%%%%%%%
%%%%%%%%%%%%%%%%%%%%%%%%%%%%%%%%%%
\section{ Model description}

We study an extension of the SIS (susceptible-infectious-susceptible) model with the additional feature that a fraction of the infectious individuals will be isolated. Consider, to begin with, a network of $N$ nodes; each node represents a host, and nodes that are linked by edges are neighbors. Each host has two discrete states: healthy and susceptible ($S$), and infectious ($I$). Infected hosts infect their neighbors until they recover and rejoin the susceptible group. Models of this type have been studied a great deal and require no further introduction. We refer the reader to Refs.~\cite{Anderson1991,Diekmann2000,Brauer2008,Keeling2011}
for a broad introduction.

In this work, we consider a model as above with the additional feature of isolation of infected hosts. Specifically, if a host remains infectious for $\tau$ units of time without having recovered, it enters a new state, $Q$ (for isolation or \textit{quarantine}) with probability $p$. 

We are aware that the term ``quarantine'' in the literature refers to the isolation of individuals who may be infected but are not yet symptomatic \cite{Siegel2007,centers2014}. The letter "Q" here, is solely used to clearly distinguish it from the infectious class $I$.

The hosts that do not enter state $Q$ at time $\tau$ remain infectious until they recover on their own. 
A host that enters state $Q$ remains in this state for $\kappa$ units of time, at the end of which it is discharged and rejoins the healthy and susceptible pool. We define $r$ to be the reproductive number of the disease in the absence of isolation, i.e. for $p=0$. Note that this deviates from the canonical choice of the capital letter $R_0$, which we will use for the reproductive number of the disease including isolation, i.e. when $p>0$.

The numbers $r,\tau,\kappa>0$ and $p\in[0,1]$ are to be viewed as parameters of the model, with $\tau$ representing the {\it identification time} between the infection and isolation, and $\kappa$ the {\it isolation time}. The number $p$ can be interpreted as the probability of an infectious host being diagnosed and isolated, we call it {\it identification probability}. Table \ref{Tb1} summarizes the main parameters of the SIQ model and their meaning. See Fig.~\ref{fig:model} for a schematic of the model.

We now go to a mean field approximation of this process. Let $S(t),I(t)$ and $Q(t)$ denote the fractions of individuals in the corresponding states at
time $t$, so that $S(t)+I(t)+Q(t)=1$ and the size of the population is assumed 
to be constant. Assuming the independence of the susceptible and
infectious groups, we arrive at the following system of delay differential equations:
\begin{eqnarray}
\dot{S}\negmedspace\left(t\right) & = & -rS\negmedspace\left(t\right)I\negmedspace\left(t\right)+I\negmedspace\left(t\right)+r\varepsilon S\negmedspace\left(t-\tau-\kappa\right)I\negmedspace\left(t-\tau-\kappa\right),\label{eq:S-dyn}\\
\dot{I}\negmedspace\left(t\right) & = & rS\negmedspace\left(t\right)I\negmedspace\left(t\right)-I\negmedspace\left(t\right)-r\varepsilon S\negmedspace\left(t-\tau\right)I\negmedspace\left(t-\tau\right),\label{eq:I-dyn}\\
\dot{Q}\negmedspace\left(t\right) & = & r\varepsilon\left[S\negmedspace\left(t-\tau\right)I\negmedspace\left(t-\tau\right)-S\negmedspace\left(t-\tau-\kappa\right)I\negmedspace\left(t-
\tau-\kappa\right)\right],\label{eq:Q-dyn}
\end{eqnarray} 
where $\varepsilon:=pe^{-\tau}$ can be interpreted as the effectiveness
of the identification process. Detailed explanations of the modeling leading to system \eqref{eq:S-dyn}\textendash \eqref{eq:Q-dyn} are given in the Appendix. 

\begin{table}[htbp]
\centering
\begin{tabular}{c|l} 
\hline
parameter & meaning \\
\hline 
\hline
$r$ & reproductive number of the disease in the absence of isolation ($p=0$)\\ 
$p$ & probability to identify an infectious individual \\ 
$\tau$ & time elapsed between infection and identification \\
$\kappa$ & time spent in isolation after identification \\
$\varepsilon$ &  effectiveness of the identification process $(\varepsilon=pe^{-\tau})$\\
\hline
 \end{tabular}
  \vspace{0.2cm}
   \caption{{\small Main parameters of the SIQ model.}}
   \label{Tb1}
\end{table}

\begin{figure}[h]
\includegraphics[width=\linewidth]{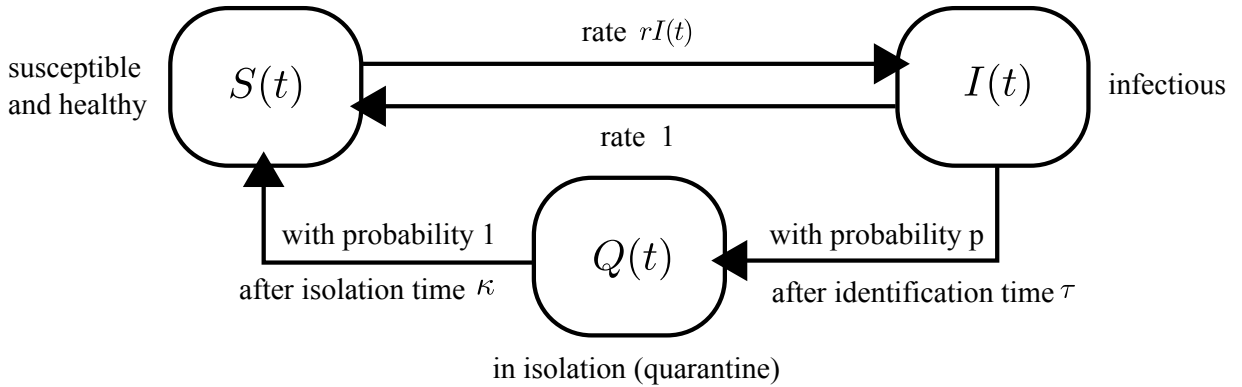}%
\caption{\label{fig:model}Illustration of the SIQ model. 
The resulting SIQ model is an extension of the SIS model with the additional feature that with probability
$p$ individuals that have been infectious for a time $\tau$ are identified and isolated for $\kappa$ units of time at the end of which they are healthy again.}
\end{figure}

This model neglects several aspects of epidemic scenarios, such as the acquisition of immunity or delays in the development of infectiousness. To demonstrate that the model described above is generalizable, we will, 
in Sec. 8, introduce an {\it latency period} $\sigma$ to become infectious after being infected to the model above, and show how much
of the analysis carries over. For conceptual clarity, we will first treat the $\sigma=0$ 
case in Secs. 3--7.

%%%%%%%%%%%%%%%%%%%%%%%%%%%%%%%%%

\section{Non-technical overview of the main results}
\label{sec:overview}

In this section, we describe the main results leaving precise
technical formulations for later sections.  Recall that without the isolation strategy our SIQ model reduces to the SIS model with disease reproduction
number $r$, so that an infection spreads if and only if $r>1$. 
Of interest in this paper is the case $r>1$, so that
if no measures are taken the infection will spread. 
Consider a history $(\phi_S(t), \phi_I(t), \phi_Q(t)), t \le 0$,
corresponding to the sudden appearance of a small infection at time $t=0$.
For definiteness, let $\phi_Q(t)=0$ for all $t \le 0$, $\phi_I(t) =0$ for all $t<0$, $0<\phi_I(0) \ll 1$, and $\phi_S+\phi_I+\phi_Q \equiv 1$.
Unless otherwise stated, this history will be assumed in the discussion below.
Our main results can be summarized as follows:

\medskip
\begin{itemize}
\item[1.] {\it Required minimum identification probability.} We prove that 
an outbreak can be prevented only if
\begin{equation}\label{eq:pc}
p>p_{c}=1-1/r\ ,
\end{equation}
that is, to have a chance to stop the outbreak, one must be able to identify 
a sufficiently large fraction of infectious individuals.
%As this requires 
%orchestrated action on the part of health authorities, we can think of
%$p_{c}$ as a response capability.

\medskip
\item[2.] {\it Critical identification time.}
Possessing the ability to detect individuals with probability   $p > p_c$ alone  is not enough; one must be prepared to act with sufficient
speed: we prove that for each $p>p_c$, there is a critical identification time 
\begin{equation}\label{eq_tauc}
\tau_{c}(p) = \ln \frac{p}{p_c}.
\end{equation}
Specifically, for $p>p_c$ and $\tau<\tau_{c}$, the infection dies out. 
In this case, the time $\kappa$ that infectious individuals spend in isolation is
of no consequence.  These results are presented in Sec.~\ref{sec:Neighborhood-of-Disease-Free}.
\end{itemize}

\medskip
 We can readily compute  the critical identification time $\tau_{c} $ for various diseases once we have the reproductive number $r$ and identification probability $p$. 
In Eqs. (\ref{eq:S-dyn})-(\ref{eq:S-dyn}), we have done the usual rescaling 
$t \mapsto  t / \gamma$ where $\gamma$ is the rate of recovery
(see the Appendix for a full discussion). This means that $\tau_c$ is also rescaled. 
While that is convenient mathematically, it is also interesting to compare critical identification times {\it without rescaling},
so that we can analyze diseases in their natural time spans. To that end, 
we define
\begin{equation}\label{eq:Tc}
T_c = \frac{\tau_c}{\gamma},
\end{equation}
and show, in Table \ref{Tb2}, the critical response capability $p_c$ and critical identification time $T_c$ for $p=0.8$. 

\begin{table}[htbp]
\centering
\begin{tabular}{c|ccccc} 
\hline
& $r$ &  $1/\gamma$ & $p_c$ & $T_c$ \\
\hline
\hline
H1N1 2016 [Brazil] \cite{WHO_flunet, Muller2015}  &  1.7  & 7.0 & 0.41 & 4.7 \\
Ebola 2014 [Guin./Lib.]  \cite{Althaus2014} & 1.5 & 12.0 & 0.33 & 10.5\\
Ebola 2014 [Sierra Leone] \cite{Althaus2014} & 2.5 & 12.0 & 0.6 & 3.5\\
Spanish Flu 1917 \cite{Muller2015}  & 2 & 7.0 &   0.5 & 3.3 \\
Influenza A  \cite{Muller2015} & 1.54 & 3.0 & 0.35 & 1.0 \\
Hepatitis A \cite{Peak2017} & 2.25 & 13.4 & 0.56 & 4.89 \\
SARS \cite{Peak2017} & 2.90 & 11.8 & 0.66 & 4.31 \\
Pertussis \cite{Peak2017} & 4.75 & 68.5 & 0.79 & 0.91 \\
Smallpox \cite{Peak2017} & 4.75 & 17.0 & 0.79 & 0.26 \\ 
\hline
 \end{tabular}
 \vspace{0.2cm}
   \caption{{\small Critical response capability $p_c$ and critical identification time $T_c$ (in days) for various diseases with basic reproductive number $r$ as well as $1/\gamma$ (in days). The critical $\tau_c$ (before rescaling) is calculated using \eqref{eq:pc}--\eqref{eq:Tc} assuming that $80\%$ of infectious individuals are identified and isolated. The values of $r$ and $1/\gamma$ are taken from the references given in the first row.
}}
\label{Tb2}
\end{table}

As shown in Table \ref{Tb2}, even when the fraction of identified individuals is as high as $80\%$ the critical identification time $T_c$ can be as 
short as $3$ days for severe outbreaks such as the Spanish Flu and the Ebola in Sierra Leone. Of major concern is what happens if such an
identification time is not met. Our next result addresses this scenario. 

\medskip
\begin{itemize}
\item[3.] {\it Prediction of endemic state as function of $\varepsilon$ and $\kappa$.}
 From Items 1 and 2, we know that when $p<p_c$ or $\tau> \tau_c$, 
so that $\varepsilon = pe^{-\tau} < 1- \frac1r$,
the infection will persist. When that happens, we prove that if the system tends to an endemic equilibrium,
the fraction of infectious individuals in the endemic state will be 
$$
I = \frac{1}{1 - \varepsilon + \varepsilon \kappa}  \cdot \left(1-\varepsilon-\frac{1}{r}\right).
$$
Notice that increasing $\kappa$ leads to an endemic equilibrium with a smaller $I$.
\end{itemize}

\medskip
As an illustration consider a hypothetical response to the Ebola  outbreak in Sierra Leone with $\varepsilon = 0.5$. We obtain that the final fraction of infectious individuals in the endemic state is $I =  0.2/({1 + \kappa})$.
%Hence roughly speaking
%\[
%\mbox{fraction of infectious individuals } \propto 
%\frac{1}{1+\kappa}\ .
%\]

\medskip
\begin{itemize}
\item[4.] {\it Bifurcation analysis at endemic equlibria.} 
For each $p$ and $\tau$ with $\tau \ll1$, we performed 
a rigorous bifurcation analysis at each endemic
equilibrium point with $\kappa$ as bifurcation 
parameter. We proved that the equilibrium destabilizes through a Hopf
bifurcation as $\kappa$ is increased, and that it undergoes
a cascade of Hopf bifurcations as $\kappa$ is increased further.

\bigskip
\item[5.] {\it Effect of $\kappa$ on the course
of an epidemic.} Item 4 described the dynamics near an endemic
equilibrium irrespective of how we got there. Here we return to the setting
of Item 3, i.e. the sudden appearance of a small infection that gets out of control, and ask how the duration of isolation will influence the course of events. Our results for this part are numerical.
We show that the infection will approach the endemic equilibrium predicted
in Item 3, and that as $\kappa$ increases, the equilibrium destabilizes 
through a Hopf bifurcation in a manner similar to that described in Item 4. 
For large $\kappa$, our simulations suggest that the fraction of infectious individuals, can have periodic oscillations with nontrivial amplitudes. 
These results are presented in Sec.~\ref{sec:Case-of-endemic-infection}.
\end{itemize}

\medskip
To summarize, the SIQ model offers quantitative measures
for critical response capabilities and identification times needed to prevent
outbreaks of infectious diseases. For endemic infections, our analysis
offers guidance to optimal choices of isolation durations. The implications
of these results on epidemics control are clear: Isolation of infectious
hosts is not without cost, both in terms of society and economics.
These must be weighed against the costs of an endemic infection, as
well as strategies for disease management. The SIQ model proposed here
may assist in such costs-and-benefits analysis.

%%%%%%%%%%%%%%%%%%%%%%%%%%%%%%%%%

\section{Basic Properties of the Model}

\subsection{Mathematical framework}

Equations \eqref{eq:S-dyn}\textendash \eqref{eq:Q-dyn} 
define a dynamical system on the phase space 
$C:=C\left(\left[-\tau-\kappa,0\right],\mathbb{R}^{3}\right)$,
the Banach space of continuous functions with the norm 
\[
\left\Vert \phi\right\Vert =\sup_{\theta\in\left[-\tau-\kappa,0\right]}\left|\phi\left(\theta\right)\right|\ ,
\]
$|\cdot|$ being the Euclidean norm in $\mathbb{R}^{3}$. Given an
initial function $\phi\in C$, the solution $x(t,\phi),$ $t\ge0$,
of the initial value problem to \eqref{eq:S-dyn}\textendash \eqref{eq:Q-dyn}
exists and is unique \cite{Hale1993}. We use the standard notation 
\[
x_{t}(\phi)=x(t+\theta;\phi),\quad\theta\in[-\tau-\kappa,0].
\]
This solution defines a $C^{1}$ semiflow $T^{t}:\phi\mapsto x_{t}(\phi)$
on $C$ \cite{Hale1993}. 

Observe that the conservation of mass property of Eqs.~\eqref{eq:S-dyn}\textendash \eqref{eq:Q-dyn},
namely $S'(t)+I'(t)+Q'(t)\equiv0$, implies that if $\phi=(\phi_{S},\phi_{I},\phi_{Q})$
and $x(t;\phi)=(S(t),I(t),Q(t))$, then $S(t)+I(t)+Q(t)=\phi_{S}(0)+\phi_{I}(0)+\phi_{Q}(0)$
for all $t\ge0$. In particular, the manifold 
\[
\mathcal{C}:=\{\phi\in C([-\tau-\kappa,0],\mathbb{R}^{3})\  | \ \phi_{S}(\theta)+\phi_{I}(\theta)+\phi_{Q}(\theta)=1\mbox{ for all }\theta\in[-\tau-\kappa,0]\}
\]
is positively invariant with respect to the semiflow $T^{t}$.

In the context of our epidemiological model, all solutions of interest
have the property that for each $t$, $x(t;\phi)$ takes value in
the $2$-simplex 
\[
\Delta^{2}=\left\{ u=(u_{1},u_{2},u_{3})\in\mathbb{R}^{3}:\sum_{i}u_{i}=1,u_{i}\ge0,i=1,2,3\right\} ,
\]
i.e., $x_{t}(\phi)\in\tilde{\mathcal{C}}=\{\psi\in\mathcal{C}:\psi(\theta)\in\Delta^{2}\mbox{ for all }\theta\in\left[-\tau-\kappa,0\right]\}$
for all $t\ge0$. In Sec. \ref{subsec:Biologically-relevant-solutions}
we show that biologically relevant initial conditions that belong
to a certain subset of $\tilde{\mathcal{C}}$ lead to solutions that
belong to $\tilde{\mathcal{C}}$ for all $t\ge0$. When studying $T^{t}$
as a dynamical system, it is conceptually simpler to work with $\mathcal{C}$
as the phase space. We will therefore do that in our theoretical investigations,
and focus on trajectories with $T^{t}(\phi)\in\tilde{\mathcal{C}}$
in biological interpretations. Observe that the dynamics on $\mathcal{C}$
are completely determined by any two of Eqs.~\eqref{eq:S-dyn}\textendash \eqref{eq:Q-dyn}
together with the conservation of mass.

%%%%%%%%%%%%%%%%%%%%%%%%%%%%%%%%%

\subsection{Equilibrium solutions and $\omega$-limit sets\label{subsec:Equilibrium}}

Recall that the \textit{$\omega$-limit set} $\omega(\phi)$
of $\phi\in\mathcal{C}$ under the semi-flow $T^{t}$ is defined to
be 
\[
\omega(\phi)\ =\ \{\psi\in\mathcal{C}\ | \ T^{t_{n}}\phi\to\psi\mbox{ for some sequence }t_{n}\to\infty\}.
\]
For a solution that is bounded, $x_{t}(\phi)$ is $C^{1}$ with a
uniform bound on its derivatives for all $t\ge\tau+\kappa$. Thus,
by the Arzela-Ascoli Theorem, $\omega(\phi)$ is nonempty and compact
in $\mathcal{C}$ (with its $C^{0}$ norm).

In particular, consider an equilibrium solution  $\phi$ of Eqs.~\eqref{eq:S-dyn}\textendash \eqref{eq:Q-dyn}, which means that $x\left(t;\phi\right)=\phi\left(0\right)$ for all $t$, and $\phi$ is a constant function.
For $u=(u_{1},u_{2},u_{3})\in\mathbb{R}^{3}$ with $\sum_{i}u_{i}=1$,
we will use the notation $\hat{u}$ to denote the constant function
in $\mathcal{C}$ with $\hat{u}(\theta)=u$ for all $\theta$. The
equilibria of Eqs.~\eqref{eq:S-dyn}\textendash \eqref{eq:Q-dyn}
can be computed as follows. If $\hat{\phi}=(\hat{\phi}_{S},\hat{\phi}_{I},\hat{\phi}_{Q})$
is an equilibrium, then it must satisfy
\begin{equation}
0=\left(r\left(1-\varepsilon\right)\phi_{S}-1\right)\phi_{I}.\label{eq:I-dyn-2}
\end{equation}
Thus any $\hat{\phi}$ with $\phi_{I}=0$ or $\phi_{S}=1/r(1-\varepsilon)$
is an equilibrium solution. We define
\[
\mathcal{\mathcal{E}}_{0}:=\left\{ \hat{\phi}\in\mathcal{C}|\,\phi_{I}=0\right\} \qquad\mbox{and}\qquad\tilde{\mathcal{E}}_{0}:=\mathcal{\mathcal{E}}_{0}\bigcap\tilde{\mathcal{C}}
\]
to be the sets of \textit{disease-free equilibria}. Analogously we
define the sets
\[
\mathcal{E}_{I}:=\left\{ \hat{\phi}\in\mathcal{C}|\,\phi_{S}=\frac{1}{r\left(1-\varepsilon\right)}\right\} \qquad\mbox{and}\qquad\tilde{\mathcal{E}}_{I}:=\mathcal{E}_{I}\bigcap\tilde{\mathcal{C}},
\]
which we refer to as \emph{endemic equilibria} in the case $\phi_{I}\ne0$.
For $\phi\in\mathcal{C}$, it is possible that $T^{t}(\phi)$ will
approach one of the equilibria above as $t\to\infty$, but this need
not be the only possible long-time behavior (and we will show that
it is not). 

%%%%%%%%%%%%%%%%%%%%%%%%%%%%%%%%
\subsection{Biologically relevant solutions and their positivity\label{subsec:Biologically-relevant-solutions}}

We consider a solution $x(t;\phi)$ as biologically relevant if $x_{t}(\phi)\in\tilde{\mathcal{C}}$
for all $t\ge0$. In this section we give sufficient conditions for
positivity. More specifically, we show that any initial condition
corresponding to an infection that started just prior to $t=0$ 
leads to a biologically relevant
solution. 

We start with a function $\psi:\left[-\tau-\kappa,0\right]\to\mathbb{R}^{3}$
(that may or may not be in $\mathcal{C}$). Think of it as a situation
we find ourselves in \textendash{} without knowledge of how we got
there. From this initial condition, we evolve the system according
to Eqs.~\eqref{eq:S-dyn}\textendash \eqref{eq:Q-dyn}. The next
lemma gives conditions on $\psi$ that will lead to biological solutions. 
\begin{lem}
\label{lem:positivity}Let $\psi=(\psi_{S},\psi_{I},\psi_{Q}):[-\tau-\kappa,0]\to\mathbb{R}^{3}$
be a piecewise continuous function with values in $\Delta^2$.
%satisfying $\psi\in\tilde{C}$
%and $S+I+Q=1$. 
We assume further that 
\[
\psi_{I}(0)\ge rp\int_{-\tau}^{0}e^{\theta}\psi_{S}(\theta)\psi_{I}(\theta)d\theta,
\]
and 
\[
\psi_{Q}(0)\ge r\varepsilon\int_{-\tau-\kappa}^{0}\psi_{S}(\theta)\psi_{I}(\theta)d\theta.
\]
Then $S\left(t\right),I\left(t\right),Q\left(t\right)\ge0$, \textup{$S\left(t\right)+I\left(t\right)+Q\left(t\right)=1$}
for all $t\geq0$, and $x_{t}(\psi)\in\tilde{\mathcal{C}}$ for all
$t\ge\tau+\kappa$. In particular, if $\psi\in\tilde{\mathcal{C}}$,
then $x_{t}(\psi)\in\tilde{\mathcal{C}}$ for all $t\ge0.$ 
\end{lem}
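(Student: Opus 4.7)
Conservation of mass $S(t)+I(t)+Q(t)=1$ follows immediately from summing Eqs.~\eqref{eq:S-dyn}--\eqref{eq:Q-dyn}, whose right-hand sides cancel identically, so $\tfrac{d}{dt}(S+I+Q)\equiv 0$ and the claim follows from $\psi(0)\in\Delta^{2}$. The substantive content is non-negativity of each coordinate; my plan is to derive explicit integral representations of $I$ and $Q$ in which the two hypotheses exactly absorb the potentially negative ``history contributions,'' handle $S$ by variation of constants, and then close the loop by a bootstrap argument in the method-of-steps style.

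For $I$, rewrite \eqref{eq:I-dyn} as $\dot{I}+I=f(t)-\varepsilon f(t-\tau)$, where $f(u):=rS(u)I(u)$ for $u\ge 0$ and $f(u):=r\psi_{S}(u)\psi_{I}(u)$ for $u\in[-\tau-\kappa,0]$. Multiplying by the integrating factor $e^{t}$, integrating from $0$ to $t$, and substituting $u=s-\tau$ in the delayed term (noting $\varepsilon e^{\tau}=p$) yields
\begin{equation*}
I(t)=e^{-t}\psi_{I}(0)+\int_{0}^{t}e^{-(t-s)}f(s)\,ds-p\int_{-\tau}^{t-\tau}e^{-(t-s)}f(s)\,ds.
\end{equation*}
Since $f\ge 0$ on $[-\tau-\kappa,0]$ the negative integral is at most $pe^{-t}\int_{-\tau}^{0}e^{s}r\psi_{S}(s)\psi_{I}(s)\,ds$, and the first hypothesis lets this be cancelled by $e^{-t}\psi_{I}(0)$; the remaining expression is a weighted integral of $f$ over $[0,t]$ with positive weights, hence is $\ge 0$ as soon as $f\ge 0$ on $[0,t]$.

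Direct integration of \eqref{eq:Q-dyn} yields
\begin{equation*}
Q(t)=\psi_{Q}(0)+\varepsilon\int_{-\tau}^{t-\tau}f(u)\,du-\varepsilon\int_{-\tau-\kappa}^{t-\tau-\kappa}f(v)\,dv,
\end{equation*}
and case analysis in $t\le\tau$, $\tau<t\le\tau+\kappa$, $t\ge\tau+\kappa$ shows the worst-case net history contribution is $-\varepsilon\int_{-\tau-\kappa}^{0}r\psi_{S}(u)\psi_{I}(u)\,du$, which the second hypothesis exactly dominates. For $S$, rewrite \eqref{eq:S-dyn} as the scalar linear ODE $\dot{S}(t)=-rI(t)\,S(t)+h(t)$ with $h(t):=I(t)+r\varepsilon S(t-\tau-\kappa)I(t-\tau-\kappa)$; once $I\ge 0$ and the delayed product $S(\cdot)I(\cdot)$ is $\ge 0$, variation of constants gives
\begin{equation*}
S(t)=\psi_{S}(0)\,e^{-r\int_{0}^{t}I(u)\,du}+\int_{0}^{t}e^{-r\int_{s}^{t}I(u)\,du}h(s)\,ds\ge 0.
\end{equation*}

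The three arguments are coupled through the product $SI$ appearing in $f$ and $h$, so I would close the loop by a bootstrap. On $[0,\tau]$ all delayed quantities are determined by $\psi$ and are non-negative since $\psi\in\Delta^{2}$; defining $t^{*}:=\sup\{t\le\tau:S,I\ge 0\text{ on }[0,t]\}$, the lower bound for $I(t^{*})$ and the formula for $S(t^{*})$ force $t^{*}=\tau$. The degenerate equality case $\psi_{I}(0)=rp\int_{-\tau}^{0}e^{s}\psi_{S}\psi_{I}\,ds$ requires $\psi_{S}(\theta)\psi_{I}(\theta)=0$ a.e.\ on $[-\tau,0]$ and $f\equiv 0$ on $[0,t^{*}]$, from which one concludes $I\equiv 0$ on $[0,\tau]$, which is still non-negative. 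Induction over the successive intervals $[n\tau,(n+1)\tau]$ propagates positivity to all $t\ge 0$. For $t\ge\tau+\kappa$ the history window of $x_{t}(\psi)$ lies in $[0,\infty)$, where the solution is $C^{1}$, giving $x_{t}(\psi)\in\tilde{\mathcal{C}}$; when $\psi$ is already continuous, i.e., $\psi\in\tilde{\mathcal{C}}$, the same conclusion holds from $t=0$. The main delicate point is the three-case bookkeeping in the $Q$-identity together with the handling of the equality sub-case in the $I$-bootstrap; both require the full strength of the integral hypotheses.
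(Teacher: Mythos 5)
Your proposal is correct and is essentially the paper's own argument: the paper proves this lemma by pointing to the cohort decomposition $[I]=[I]_1+[I]_2$ in the Appendix, and your integrating-factor identity for $I(t)$ is exactly that decomposition written directly at the level of the DDE, with the first hypothesis absorbing the history debit in $[I]_1$ and the nonnegative weights $e^{-(t-s)}\left(1-p\,\mathbf{1}_{\{s\le t-\tau\}}\right)$ giving $[I]_2\ge 0$; the $Q$ and $S$ computations correspond to the paper's ``similar analysis.'' The only loose point is the closure of the bootstrap at $t^{*}$ (openness of the positivity set is not immediate from your sketch and is cleanest via a Gronwall bound on the negative parts of $S$ and $I$), but this is a standard completion that the paper itself leaves entirely implicit.
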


The proof of Lemma \ref{lem:positivity} follows from the derivation
in the Appendix. Note that the conditions of 
Lemma \ref{lem:positivity} are satisfied for open sets of initial conditions
corresponding to the sudden uptick of an infection  around
time $0$, described by $\psi$ with $\psi_I(0), \psi_Q(0)>0$ and $0<\psi_I(t), \psi_Q(t)
\ll \psi_I(0), \psi_Q(0)$ for all $t<-\delta$ for some $\delta>0$
sufficiently small.

%%%%%%%%%%%%%%%%%%%%%%%%%%%%%%%%%%%
%%%%%%%%%%%%%%%%%%%%%%%%%%%%%%%%%%%
\section{Neighborhood of Disease-Free Equilibria\label{sec:Neighborhood-of-Disease-Free}}

In Secs.~\ref{subsec:Linear-analysis} and \ref{subsec:The-nonlinear-picture},
we fix $r,$ $p$, $\tau$, and give a complete description of the
dynamics in a neighborhood of $\mathcal{E}_{0}$, the set of disease-free
equilibria identified in Sec.~\ref{subsec:Equilibrium}. The truly
pertinent question, however, is what $\tau$ and $p$ need to be
to curb the propagation of small initial infections for a disease
the intrinsic reproductive number of which is $r$. These questions
will be answered in Sec.~\ref{subsec:Critical-values-of}, using
the results from the first two subsections.

\subsection{Linear analysis at $\mathcal{E}_{0}$\label{subsec:Linear-analysis}}

We parametrize $\mathcal{E}_{0}$ by $ \widehat{u(q)},$
$q\in\mathbb{R},$ where $u(q)=\left(1-q,0,q\right)$, and
study the linearized equation at each point. The following Lemma gives
the characteristic equation for a general equilibrium. 
\begin{lem}
\label{lem:characteristicf} Let  $\hat{w}\in\mathcal{C}$ with $w=\left(w_{S},w_{I},1-w_{S}-w_{I}\right)$
be an equilibrium solution of Eqs.~\eqref{eq:S-dyn}\textendash \eqref{eq:Q-dyn}.
Then the characteristic equation at $\hat{w}$ is given by
\begin{equation}
\chi\left(\lambda,\hat{w}\right)=0, \label{eq:spec}
\end{equation}
where
\begin{align}
\chi\left(\lambda,\hat{w}\right)=&\lambda\left(\lambda+1-rw_{S}\left(1-\varepsilon e^{-\tau\lambda}\right)+rw_{I}\left(1-\varepsilon e^{-(\tau+\kappa)\lambda}\right)\right) \nonumber \\
&+rw_{I}\varepsilon e^{-\tau\lambda}\left(1-e^{-\lambda\kappa}\right).\nonumber
\end{align}
\end{lem}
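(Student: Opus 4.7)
The plan is to derive the characteristic equation by linearizing the DDE system at $\hat{w}$, exploiting the conservation law $S+I+Q \equiv 1$ to reduce to a planar problem, and then computing a $2\times 2$ determinant with delay-dependent entries.

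First I would exploit conservation of mass: since the dynamics on $\mathcal{C}$ are determined by any two of the equations \eqref{eq:S-dyn}--\eqref{eq:Q-dyn}, I work with the $(S,I)$-subsystem (the $Q$-equation is redundant). Writing $S(t) = w_S + s(t)$ and $I(t) = w_I + i(t)$ with $s, i$ small, and linearizing the bilinear terms via $S(t)I(t) = w_S w_I + w_S\, i(t) + w_I\, s(t) + O(|s|^2 + |i|^2)$, I obtain the linear DDE
\begin{align*}
\dot{s}(t) &= -r w_I\,s(t) + (1 - r w_S)\,i(t) + r\varepsilon w_I\, s(t-\tau-\kappa) + r\varepsilon w_S\, i(t-\tau-\kappa),\\
\dot{i}(t) &= \phantom{-}r w_I\,s(t) + (r w_S - 1)\,i(t) - r\varepsilon w_I\, s(t-\tau) - r\varepsilon w_S\, i(t-\tau).
\end{align*}

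Next I would insert the exponential ansatz $(s(t), i(t)) = (s_0, i_0) e^{\lambda t}$ to obtain the algebraic system $M(\lambda)\, (s_0, i_0)^T = 0$ with
\[
M(\lambda) = \begin{pmatrix} -\lambda - r w_I A(\lambda) & 1 - r w_S A(\lambda) \\ r w_I B(\lambda) & -\lambda - 1 + r w_S B(\lambda) \end{pmatrix},
\]
where I abbreviate $A(\lambda) = 1 - \varepsilon e^{-(\tau+\kappa)\lambda}$ and $B(\lambda) = 1 - \varepsilon e^{-\tau\lambda}$. The characteristic equation is then $\det M(\lambda) = 0$.

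Finally I would expand the determinant. The key bookkeeping observation is that the quadratic delay terms coming from the two off-diagonal and the two diagonal products both produce the monomial $r^2 w_S w_I A(\lambda) B(\lambda)$ with opposite signs, so these cancel. After collecting what remains, one finds
\[
\det M(\lambda) = \lambda\bigl(\lambda + 1 - r w_S B(\lambda) + r w_I A(\lambda)\bigr) + r w_I\bigl(A(\lambda) - B(\lambda)\bigr),
\]
and since $A(\lambda) - B(\lambda) = \varepsilon e^{-\tau\lambda}(1 - e^{-\kappa\lambda})$, substituting back the definitions of $A$ and $B$ yields exactly $\chi(\lambda, \hat{w})$ as stated. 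The main (and really only) obstacle is the careful sign bookkeeping in the cross terms, which is easy to mishandle; the cancellation of the $r^2 w_S w_I A B$ contributions is the small miracle that makes the final expression a polynomial of degree $1$ in each of $w_S$ and $w_I$ rather than degree $2$.
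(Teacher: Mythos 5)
Your derivation is correct and is exactly the intended argument: the paper states Lemma~\ref{lem:characteristicf} without proof, and the standard route is precisely to use conservation of mass to reduce to the $(S,I)$-subsystem (which decouples from $Q$ anyway), linearize, and compute the $2\times 2$ delay determinant. I checked your linearized equations, the matrix $M(\lambda)$, and the cancellation of the $r^2 w_S w_I A(\lambda)B(\lambda)$ terms, and everything matches the stated $\chi(\lambda,\hat{w})$.
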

Since $\mathcal{E}_{0}$ consists of  a line of equilibria, $0$
is clearly an eigenvalue for $w_{I}=0$ corresponding to the direction
along the line. The stability of these equilibria in
directions transverse to $\mathcal{E}_{0}$ is determined by the
remaining eigenvalues.
\begin{thm}
\label{thm:E0}Let $\tau$ and $p$ be fixed, and assume $\varepsilon=pe^{-\tau}<1$.
We denote
\[
q_{c}:=1-\frac{1}{r}\frac{1}{1-\varepsilon}.
\]
If \textup{$q\ge q_{c}$}, then $\widehat{u(q)}$ is linearly
stable, and if \textup{$q < q_{c}$}, then  $\widehat{u(q)}$  is linearly
unstable. In more detail, at $q\ne q_{c}$,
the eigenvalue $\lambda=0$ of the equilibrium $\widehat{u(q)}$  has
multiplicity $1$, and there is no other eigenvalue on the imaginary
axis. For $q\ge q_{c}$, all nonzero eigenvalues $\lambda$ have
$Re(\lambda)<0$. For $q<q_{c}$, there is exactly one eigenvalue
$\lambda_{1}$ with $Re(\lambda_{1})>0$. 
\end{thm}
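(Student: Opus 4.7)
My plan is to apply Lemma~\ref{lem:characteristicf} to $\widehat{u(q)}=(1-q,0,q)$. Substituting $w_S=1-q$, $w_I=0$ into $\chi$ kills the second line of the expression for $\chi$ and simplifies the first, giving
\begin{equation*}
\chi(\lambda,\widehat{u(q)})\ =\ \lambda\, D(\lambda),\qquad D(\lambda)\ :=\ \lambda+1-B\bigl(1-\varepsilon e^{-\tau\lambda}\bigr),
\end{equation*}
where $B:=r(1-q)$. The explicit factor $\lambda$ produces the zero eigenvalue along the line $\mathcal{E}_0$, and since $\chi'(0,\widehat{u(q)})=D(0)=1-B(1-\varepsilon)$ vanishes iff $q=q_c$, this eigenvalue is simple precisely when $q\ne q_c$. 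The remaining eigenvalues are the zeros of $D$, and the task reduces to locating them.

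I would handle real zeros using strict convexity of $D$ on $\mathbb{R}$ (one has $D''(\lambda)=B\varepsilon\tau^2 e^{-\tau\lambda}>0$) together with $D(\lambda)\to+\infty$ as $\lambda\to\pm\infty$ and $D(0)$ carrying the sign of $q-q_c$. When $q<q_c$, convexity and $D(0)<0$ give exactly two real zeros, one positive and one negative; the positive one is the unstable eigenvalue $\lambda_1$. When $q>q_c$, the key input is
\begin{equation*}
\varepsilon\tau\ <\ 1-\varepsilon,
\end{equation*}
which follows from $\varepsilon=pe^{-\tau}$ with $p\in[0,1]$ and $\tau>0$ via the elementary bound $(\tau+1)e^{-\tau}<1$. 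Together with $B(1-\varepsilon)\le 1$ this yields $B\varepsilon\tau<1$, so $D'(\lambda)=1-B\varepsilon\tau e^{-\tau\lambda}>0$ on $[0,\infty)$; combined with $D(0)>0$ this rules out positive real zeros.

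To exclude nonzero imaginary zeros, I set $\lambda=i\omega$ with $\omega\ne 0$ in $D(\lambda)=0$ and separate real and imaginary parts:
\begin{equation*}
B-1\ =\ B\varepsilon\cos(\tau\omega),\qquad \omega\ =\ B\varepsilon\sin(\tau\omega).
\end{equation*}
The second equation with $|\sin y|\le|y|$ forces $B\varepsilon\tau\ge 1$, while summing squares gives $B\varepsilon\ge|B-1|$. If $B\ge 1$ the latter rewrites as $B(1-\varepsilon)\le 1$, so the key inequality yields $B\varepsilon\tau<B(1-\varepsilon)\le 1$, contradicting $B\varepsilon\tau\ge 1$. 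If $B<1$, then $B\varepsilon\tau<\varepsilon\tau<1-\varepsilon\le 1$, another contradiction. Hence $D$ has no zero on $i\mathbb{R}\setminus\{0\}$.

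Finally, to count zeros of $D$ in the open right half plane, I would deform $q$ continuously from $1$ down to the target value with $r,p,\tau$ fixed. At $q=1$ we have $B=0$, so $D(\lambda)=\lambda+1$ has its unique zero at $\lambda=-1$, which lies in the open left half plane. As $q$ decreases, zeros of $D$ move continuously in $\lambda$, and by the previous step they cannot cross $i\mathbb{R}\setminus\{0\}$; the only transition is a simple real zero detaching from the origin at $q=q_c$. Hence for $q>q_c$, $D$ has no zero in the closed right half plane, while for $q<q_c$ it has exactly one, namely the positive real zero identified above. The main obstacle of the proof is the no-imaginary-root step, and the crucial ingredient there is the bound $\varepsilon\le e^{-\tau}$ coming from the model constraint $p\in[0,1]$, rather than the weaker $\varepsilon<1$ singled out in the hypothesis.
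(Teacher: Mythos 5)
Your argument is correct, and it follows the same skeleton as the paper's proof (factor the characteristic function as $\lambda D(\lambda)$, rule out imaginary-axis roots, then count right-half-plane roots by a continuity/homotopy argument), but the two key steps are executed quite differently. For the no-imaginary-root step, the paper derives the modulus condition $\omega^{2}=(r\varepsilon(1-q))^{2}-(1-r(1-q))^{2}$ together with a tangent equation, bounds the admissible $\omega$, and argues via monotonicity of $\tan(\omega\tau)-\omega/(1-r(1-q))$; your algebraic contradiction from $|\sin y|\le |y|$ plus the sum of squares, hinging on $\varepsilon(1+\tau)<1$, is shorter and more self-contained. For the stability anchor, the paper sits at $q=1-1/r$, where $D(\lambda)=\lambda+\varepsilon e^{-\tau\lambda}$, and cites the standard stability result for that scalar delay equation, whereas you anchor at $q=1$, where $D(\lambda)=\lambda+1$ is trivial, and you handle real roots directly by convexity of $D$. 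Your closing remark is also well taken: the paper's claim that $\lambda+\varepsilon e^{-\tau\lambda}=0$ has only stable roots ``where $\varepsilon\le 1$'' is, as literally stated, insufficient (one needs $\varepsilon\tau<\pi/2$, which follows from the model constraint $\varepsilon=pe^{-\tau}$ with $p\le 1$ but not from $\varepsilon\le 1$ alone), so making the bound $\varepsilon\le e^{-\tau}$ explicit, as you do, is a genuine improvement in rigor. One small point to tidy up: your convexity and limit arguments for real roots implicitly assume $B=r(1-q)>0$ and $\varepsilon,\tau>0$; the degenerate cases $q\ge 1$, $p=0$, or $\tau=0$ are easy but should be dispatched separately (the paper is equally silent on them).
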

\begin{proof}
Let $ \widehat{u(q)} \in\mathcal{E}_{0}.$ Using Lemma~\ref{lem:characteristicf},
the characteristic equation has the form 
\begin{equation}
\lambda\left(\lambda+1-r(1-q)\left(1-\varepsilon e^{-\tau\lambda}\right)\right)=0.\label{eq:I=00003D00003D0-hyperbolic_spec}
\end{equation}
The eigenvalue $\lambda=0$ of the first factor corresponds to the tangential direction
along the manifold $\mathcal{E}_{0}$ and the corresponding normal
eigenvalues are remaining solutions of Eq.~\eqref{eq:I=00003D00003D0-hyperbolic_spec}. The second factor has a solution $\lambda=0$, if and only if $q=q_c$ and it is easy to show that this root is simple. Next we show that $q=q_c$ is the only value for which an equilibrium can have a normal eigenvalue $\lambda$ with $Re(\lambda)=0$. The algebraic bifurcation condition $\lambda=i\omega$ implies 
\begin{equation}
\omega^{2}=\left(r\varepsilon(1-q)\right)^{2}-(1-r(1-q))^{2},\label{eq:I=00003D00003D0-hyperbolic_spec_Mod}
\end{equation}
\begin{equation}
\frac{\omega}{1-r(1-q)}=\tan\left(\omega\tau\right).\label{eq:I=00003D00003D0-hyperbolic_spec_Im}
\end{equation}
Equation \eqref{eq:I=00003D00003D0-hyperbolic_spec_Mod} admits solutions
$\omega^{2}>0$ if $q\in\left(q_{c},q_{+}\right)$, where $q_+=1-\frac{1}{r}\frac{1}{1+\varepsilon}$. Note that the
right hand side of Eq.~\eqref{eq:I=00003D00003D0-hyperbolic_spec_Mod}
attains its global maximum for 
\[
q_{\max}=1-\frac{1}{r}\frac{1}{1-\varepsilon^{2}}
\]
with the corresponding 
\[
\omega_{\max}^{2}=\frac{\varepsilon^{2}}{1-\varepsilon^{2}}.
\]
which satisfies $\left|\omega_{\max}\right|<\frac{\pi}{\tau}$ and
thus, we restrict to $\omega\in\left(-\frac{\pi}{\tau},\frac{\pi}{\tau}\right)$.
It follows from \eqref{eq:I=00003D00003D0-hyperbolic_spec_Im} that
$\omega=0$ is the only possible solution if and only if
\begin{equation}
\frac{1}{1-r(1-q)}\le\tau,\label{eq:cond}
\end{equation}
since, in this case, the function $\tan\left(\omega\tau\right)-\omega/\left(1-r(1-q)\right)$
is strictly monotone. In fact, straightforward computation shows that
Eq.~\eqref{eq:cond} is satisfied for all $q\in\left(q_{-},q_{+}\right)$,
$\tau\geq0$ and $q\in\left[0,1\right]$. Thus, Eqs.~\eqref{eq:I=00003D00003D0-hyperbolic_spec_Mod}
and \eqref{eq:I=00003D00003D0-hyperbolic_spec_Im} do not admit solutions
with $\omega>0$ and consequently, there are no further bifurcations
possible. In particular, there are no Hopf-bifurcations. 

Next we show that $Re\left(\lambda\right)<0$ for all nontrivial eigenvalues
of all $q\geq q_{c}$. We choose $q=1-\frac{1}{r}>q_{c}$,
then \eqref{eq:I=00003D00003D0-hyperbolic_spec} takes the form $\lambda+\varepsilon e^{-\tau\lambda}=0$,
where $\varepsilon\le1$. The latter equation only attains solutions
with $Re\left(\lambda\right)<0$, see e.g. \cite{HalSmith2011}. Due
to continuity, we have $Re\left(\lambda\right)<0$ for all nontrivial
eigenvalues for all $q\in\left(q_{c},1\right]\cap\left[0,1\right]$.

For any $q\in[0,q_{c})\cap\left[0,1\right]$, there is exactly
one real positive eigenvalue. Indeed, for $q=q_{c}$, the eigenvalue
crosses the imaginary axis transversely at $\lambda=0$ with the corresponding derivative 
\[
\left.\frac{\partial\left(Re\left(\lambda\right)\right)}{\partial q}\right|_{\lambda=0,q=q_{c}}<0.
\]
\end{proof}

In the context of the epidemic model, of interest is $\tilde{\mathcal{E}}_{0}\subset\tilde{\mathcal{C}}$.
We observe that $\hat{u}(q_{c})$ may or may not lie in $\tilde{\mathcal{E}}_{0}$.
In particular, if $q_{c}\le0$, then all equilibria in $\tilde{\mathcal{E}}_{0}$
are linearly stable. 
%{\color{red}The existence of critical $q_{c}$ for all
%fixed parameters, can be equivalently considered as the existence
%of a critical delay $\tau_{c}$ for all other parameters fixed and
%given $q$, such that for all $\tau\le\tau_{c}$ the given disease-free
%equilibrium $u(q)=(1-q,0,q)$ is linearly stable.}
\begin{cor}\label{cor:tauc}The hypothesis are as in Theorem \ref{thm:E0}. Then the disease-free equilibrium $\widehat u(q)$ is linearly stable if $\tau$ satisfies the inequality
\begin{equation}
\tau\le\tau_c(p,q):=\ln p-\ln\left(1-\frac{1}{r(1-q)}\right).\label{eq:tauc}
\end{equation}
Otherwise, it is linearly unstable. 
\end{cor}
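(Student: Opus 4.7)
The plan is to observe that Corollary \ref{cor:tauc} is merely a reparametrization of the stability criterion of Theorem \ref{thm:E0}: in the theorem, with $\tau$ (and hence $\varepsilon = pe^{-\tau}$) fixed, stability of $\widehat{u(q)}$ is characterized by $q \ge q_c = 1-\frac{1}{r(1-\varepsilon)}$, while here we instead fix $q$ and ask for which $\tau$ this same inequality holds. The proof is therefore a short algebraic inversion.

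First I would rewrite $q \ge q_c$ as $1-\varepsilon \le \frac{1}{r(1-q)}$, equivalently
$\varepsilon \ge 1 - \frac{1}{r(1-q)}$,
and then substitute $\varepsilon = p e^{-\tau}$. Under the generic assumption $q < 1-1/r$ so that the right-hand side is strictly positive, both sides are positive and I may take logarithms to obtain $-\tau \ge \ln\bigl(1-\frac{1}{r(1-q)}\bigr) - \ln p$, which after rearrangement is exactly $\tau \le \tau_c(p,q)$. The reverse inequality $\tau > \tau_c(p,q)$ corresponds to $q < q_c$, hence to linear instability by Theorem \ref{thm:E0}.

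The one point requiring brief comment is the sign of $1-\frac{1}{r(1-q)}$. If $q \ge 1-1/r$, this quantity is non-positive, so the condition $\varepsilon \ge 1-\frac{1}{r(1-q)}$ holds automatically for every $\tau\ge 0$; thus $\widehat{u(q)}$ is linearly stable for every $\tau$ and \eqref{eq:tauc} is to be read vacuously (with the convention $\tau_c = +\infty$). Conversely, if $p \le 1-\frac{1}{r(1-q)}$, then $\tau_c(p,q) \le 0$ and $\varepsilon = p e^{-\tau} \le p \le 1-\frac{1}{r(1-q)}$ for every admissible $\tau \ge 0$, so the equilibrium is linearly unstable throughout, again consistent with the bound. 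No step presents any serious obstacle; the whole content of the corollary lies in inverting the exponential relation between $\tau$ and $\varepsilon$ inside the threshold formula of Theorem \ref{thm:E0}.
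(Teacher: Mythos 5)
Your proposal is correct and is exactly the argument the paper intends: Corollary \ref{cor:tauc} is stated without proof as the direct algebraic inversion of the criterion $q \ge q_c = 1-\frac{1}{r(1-\varepsilon)}$ from Theorem \ref{thm:E0}, which is precisely what you carry out by substituting $\varepsilon = pe^{-\tau}$ and taking logarithms. Your additional remarks on the sign of $1-\frac{1}{r(1-q)}$ (the cases $\tau_c = +\infty$ and $\tau_c \le 0$) are a sensible clarification that the paper omits.
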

%%%%%%%%%%%%%%%%%%%%%%%%%%%%%%%%%

\subsection{The nonlinear picture near $\mathcal{E}_{0}$ \label{subsec:The-nonlinear-picture}}

As the semi-flow $T^{t}$ is $C^{1}$ (Sec. \ref{subsec:Equilibrium}),
we may appeal to invariant manifolds theory. The next theorem follows
immediately from results in \cite{Bates2000}. 
\begin{thm}
\label{thm:E0nonl}The hypotheses are as in Theorem~\ref{thm:E0}.
Then the following holds:

\begin{enumerate}
\item[(1)] Through every $\widehat{ u(q)}$ with $q>q_{c}$ passes a codimension
$1$ stable manifold $W^{s}(\widehat{ u(q)})$, with uniform estimates
away from $\widehat u(q_c)$. These manifolds foliate a uniform size
neighborhood of any compact $K\subset\{\widehat{ u(q)},q>q_{c}\}$. 
\item[(2)] Through every $\widehat{ u(q)}$ with $q<q_{c}$ passes a codimension
$2$ stable manifold $W^{s}(\widehat{ u(q)})$ and a $1$-dimensional
unstable manifold $W^{u}(\widehat{ u(q)})$, with uniform estimates away
from $\widehat{ u(q_c)}$. 
\end{enumerate}
\end{thm}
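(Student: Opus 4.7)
The plan is to realize $\mathcal{E}_0$ (minus the bifurcation point $\widehat{u(q_c)}$) as a one-dimensional normally hyperbolic invariant manifold for the $C^1$ semiflow $T^t$ on $\mathcal{C}$, and then quote the (un)stable foliation theorems of Bates--Lu--Zeng \cite{Bates2000}. The input is entirely the spectral information already established in Theorem \ref{thm:E0} and Lemma \ref{lem:characteristicf}.

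First I would verify the hypotheses of the invariant manifold theorem. Pick a compact arc $K\subset\mathcal{E}_0\setminus\{\widehat{u(q_c)}\}$. Since $T^t$ is $C^1$ (Sec.~\ref{subsec:Equilibrium}) and each $\widehat{u(q)}\in K$ is an equilibrium, $K$ is a $C^1$ (indeed real analytic) invariant curve; the vector $\partial_q\widehat{u(q)}=(-1,0,1)$ spans the tangent line. Lemma~\ref{lem:characteristicf} reduces the characteristic equation on $\mathcal{E}_0$ to the explicit form
\[
\lambda\bigl(\lambda+1-r(1-q)(1-\varepsilon e^{-\tau\lambda})\bigr)=0,
\]
and Theorem~\ref{thm:E0} shows that at every $\widehat{u(q)}\in K$, the eigenvalue $\lambda=0$ is simple and corresponds precisely to the tangential direction along $\mathcal{E}_0$, while the remaining spectrum is uniformly bounded away from the imaginary axis: on $K\cap\{q>q_c\}$ all nontrivial eigenvalues satisfy $\mathrm{Re}(\lambda)\le-\beta$ for some $\beta>0$, and on $K\cap\{q<q_c\}$ there is one simple real eigenvalue $\lambda_1\ge\alpha>0$ with all other nontrivial eigenvalues satisfying $\mathrm{Re}(\lambda)\le-\beta$. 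Uniformity of $\alpha,\beta$ follows from continuity of the roots of the characteristic equation together with the compactness of $K$. The essential spectrum of $DT^t$ for a DDE with finitely many discrete delays is contained in a small disk around the origin of the complex plane (for $t$ large enough that the solution operator is compact on the relevant complement), so the spectral splitting is of the form \emph{unstable} $\oplus$ \emph{center} $\oplus$ \emph{stable} with finite-dimensional unstable and center parts, and with a uniform spectral gap on $K$.

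Second, with this normally hyperbolic structure verified, the strong stable and strong unstable manifold theorems for semiflows in Banach spaces (Bates--Lu--Zeng \cite{Bates2000}) apply at each $\widehat{u(q)}\in K$ and provide $C^1$ invariant manifolds $W^s(\widehat{u(q)})$ and $W^u(\widehat{u(q)})$ tangent at $\widehat{u(q)}$ to the stable and unstable eigenspaces determined above. For $q>q_c$ the unstable eigenspace is trivial and the center is one-dimensional, so $W^s(\widehat{u(q)})$ has codimension one; for $q<q_c$ the unstable eigenspace is one-dimensional and the center is again one-dimensional, so $W^s(\widehat{u(q)})$ has codimension two and $W^u(\widehat{u(q)})$ is one-dimensional. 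The uniform spectral gaps on $K$ translate, via the graph-transform or Lyapunov--Perron construction in \cite{Bates2000}, into uniform size and regularity estimates on these manifolds. Finally, the same reference provides the foliation statement: the strong stable manifolds through the points of $K$ (for $q>q_c$) fit together as an invariant $C^0$ foliation of a tubular neighborhood of $K$, with $C^1$ leaves.

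The main technical obstacle is checking the normal hyperbolicity hypothesis in the precise form required by \cite{Bates2000}, i.e.\ producing the spectral gap uniformly on $K$ and controlling the essential spectrum of the linearized semigroup. Both are handled by the explicit characteristic equation \eqref{eq:I=00003D00003D0-hyperbolic_spec}: the nontrivial roots depend analytically on $q$, they stay uniformly in $\{\mathrm{Re}\lambda<-\beta\}\cup\{\mathrm{Re}\lambda>\alpha\}$ on each compact $K$ disjoint from $\widehat{u(q_c)}$, and compactness of the solution operator for $t>\tau+\kappa$ places the essential spectrum inside a disk disjoint from the center-unstable modes. Once this is in place, the conclusions of Theorem~\ref{thm:E0nonl} are exactly the content of the invariant manifold and foliation theorems in \cite{Bates2000}.
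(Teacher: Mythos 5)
Your proposal matches the paper's argument: the paper likewise derives Theorem~\ref{thm:E0nonl} directly from the invariant manifold and foliation theorems of Bates--Lu--Zeng, using the spectral splitting established in Theorem~\ref{thm:E0} (simple zero eigenvalue along $\mathcal{E}_0$, remaining spectrum off the imaginary axis, uniformly so on compacta away from $\widehat{u(q_c)}$). You simply spell out the verification of normal hyperbolicity and the essential-spectrum control in more detail than the paper, which states that the theorem ``follows immediately'' from \cite{Bates2000}.
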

We remark that the $W^{s}$- and $W^{u}$-manifolds above are strong
stable and unstable manifolds, i.e., there exist $c=c(q)$ and $\lambda=\lambda(q)>0$
such that 
\[
\zeta\in W^{s}(\hat{u}(q))\implies\|T^{t}(\zeta)-\hat{u}(q)\|<ce^{-\lambda t}
\]
for all $t\ge0$. The dynamical picture can therefore be summarized
as follows: We partition $\mathcal{E}_{0}$ into 
\[
\mathcal{E}_{0}=\mathcal{E}_{0}^{u}\cup\mathcal{E}_{0}^{c}\cup\mathcal{E}_{0}^{s},
\]
where 
\[
\mathcal{E}_{0}^{u}=\{\widehat{u(q)},q<q_{c}\}\ ,\quad\mathcal{E}_{0}^{c}=\{\widehat{u(q_{c})}\}\ ,\quad\mbox{and}\quad\mathcal{E}_{0}^{s}=\{\widehat{u(q)},q>q_{c}\}\ .
\]

For $\phi$ sufficiently near $\mathcal{E}_{0}^{s}$, $I(t)\to0$
exponentially fast, i.e., the infection dies out quickly; while for
$\phi$ sufficiently near $\mathcal{E}_{0}^{u}$, unless $\phi$ lies
in the codimensional $1$ submanifold $\cup_{q}W^{s}(\widehat{ u(q)})$,
$I(t)$ will increase, i.e., the infection will spread, beyond a level
depending on the distance of $\phi$ to $\mathcal{E}_{0}^{c}$.

%%%%%%%%%%%%%%%%%%%%%%%%%%%%%%%%

\subsection{Critical values of $p$ and $\tau$: scalings and
biological implications\label{subsec:Critical-values-of}}

We can think of $\tau$, the time between infection and isolation,
as  {\it identification time}, and $p$, the probability of an infectious
host to be properly identified and put into isolation, as \textit{isolation
probability}. With these interpretations, a question of practical
importance is the following: When presented with a scenario in which
a small fraction of the population is infectious, i.e., given an initial
condition near $\tilde{\mathcal{E}}_{0}$, what values must $p$ and
$\tau$ take to prevent an outbreak, or better yet, to wipe
out the infection altogether?
\medskip

Consider first an initial condition near the equilibirum $(S,I,Q)=(\hat 1, \hat 0, \hat 0)$ as in Sec.~\ref{sec:overview}.
In a model with no isolation, the disease reproductive number is known
to be $\mathcal{R}_{0}:=r$. Theorem~\ref{thm:E0} shows  that our isolation
procedure reduces $\mathcal{R}_{0}$ to the \textit{effective disease reproductive number} $\mathcal{R}_{\varepsilon}:=(1-\varepsilon)r$; this is a direct rephrasing
of the statement that the equilibrium at $(\hat 1, \hat 0, \hat 0)$ is stable if 
$q_c= 1- \frac1r \frac{1}{1-\varepsilon} <0$. Thus starting from near 
$(\hat 1, \hat 0, \hat 0)$, to beat the infection we have to have $(1-\varepsilon)r<1$, equivalently 
$\varepsilon>1-\frac{1}{r}$. 

We now decipher what this means for $p$ and $\tau$. As $\varepsilon = p e^{-\tau}$,
$\varepsilon>1-\frac{1}{r}$ imposes immediately a lower bound
on the isolation probability $p$, namely we must have
\begin{equation} \label{eq:critidentification}
p>p_{c}:=1-\frac{1}{r}.
\end{equation}
Having the capability to identify and properly isolate infectious hosts
alone, however, is insufficient. Response time is of the essence: for each $p>p_c$,
there is a \textit{critical identification time}
\begin{equation}
\tau_c(p) =\ln \frac{p}{p_c}\label{eq:critdiagnosis}
\end{equation}
such that if $\tau>\tau_{c}$, the infection spreads for most initial
conditions, whereas $\tau<\tau_{c}$ guarantees that the infection
will abate. If $p=p_c$, then clearly $\tau_c=0$; this implies that isolation has to be immediate upon infection. The farther $p$ is from $p_c$, the larger $\tau_c(p)$, so that there is a trade-off between probability of isolation and the delay in its implementation. 
\medskip

Consider next an initial condition near the equilibrium $(\widehat{1-q}, \hat 0, \hat q)$
for some fixed $q>0$. Theorem 3 together with an argument analogous to that above shows that in this case, the effective disease reproductive number 
is $\mathcal{R}_{\varepsilon,q}:=(1-q)(1-\varepsilon)r$. 
Then $\mathcal{R}_{\varepsilon,q}<1$ is equivalent to $\varepsilon>1-\frac{1}{r(1-q)}$. From this, we 
deduce the corresponding critical isolation probability $p_c(q)$ and 
critical identification time $\tau_c(p,q)$ for each $p$ as before, as in Corollary~\ref{cor:tauc}. 

An alternate way to understand the effective disease reproductive number 
$\mathcal{R}_{\varepsilon,q}$ for $q>0$ is as follows: For initial conditions near $(\widehat{1-q}, \hat 0, \hat q)$, a fraction $q$ of the population will never leave isolation, and
therefore will not participate in the dynamics. Removing this part of the population 
from the system changes nothing other than that we will have $S+I+Q=1-q$. Now
such a system can be rescaled to one with $\check S + \check I+\check Q=1$,
by setting $\check S = S/(1-q), \check I= I/(1-q)$ and $\check Q = Q/(1-q)$,
but observe from Eqs.~\eqref{eq:S-dyn}--\eqref{eq:Q-dyn} that in this rescaling $r$ is changed as well; it becomes $\check r =(1-q) r$, consistent
with the relation between $\mathcal{R}_\varepsilon$ and 
$\mathcal{R}_{\varepsilon,q}$ above.

\medskip
Finally, we remark that the value of $q_{c}$, which fully dictates the stability
properties of the disease-free equilibria, depends only on $p$ and
$\tau$ \textit{and not on} $\kappa$. That is to say, response capabilities
matter, but isolation duration does not, with regard to the prevention
of outbreaks.

%%%%%%%%%%%%%%%%%%%%%%%%%%%%%%%%%%%%%%
%%%%%%%%%%%%%%%%%%%%%%%%%%%%%%%%%%%%%%%
\section{Away from Disease-free Equilibria\label{sec:Away-from-Disease-free}}

We now move away from $\mathcal{E}_{0}$, the set of disease-free
equilibria, to explore dynamics on a more global scale. The condition $p>0$ is assumed throughout.

\subsection{An integral of motion\label{subsec:integral-of-motion}}

It has been pointed out that by construction epidemiological models
including delayed terms oftentimes satisfy some secondary invariant
integral condition \cite{Busenberg1980}. It turns out that in addition
to mass conservation, Eqs.~\eqref{eq:S-dyn}\textendash \eqref{eq:Q-dyn}
possess a second conserved quantity. Let $r$ and $\kappa$
be fixed. We define $H=H^{r,\kappa}:\mathcal{C}\to\mathbb{R}$
by 
\begin{equation}
H\left(\phi\right):=1-\phi_{S}(0)-\phi_{I}(-\kappa) +\negmedspace \int\limits _{-\kappa}^{0}(1-r\phi_{S}(s))\phi_{I}(s)\mbox{d}s\label{eq:H}
\end{equation}
where $\phi=(\phi_{S},\phi_{I},\phi_{Q})$.
\begin{prop}
\label{prop:foliation-sigma}For each fixed $r,\kappa$,
\[
\frac{d}{dt}H(x_{t}(\phi))=0\qquad\mbox{for all }\phi\in\mathcal{C}\mbox{ and }t\ge0,
\]
and the level sets of $H$ define a smooth foliation on $\mathcal{C}$. 
\end{prop}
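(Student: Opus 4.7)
The plan is to verify the conservation law by direct substitution after rewriting $H$ in coordinates where the $t$-dependence appears explicitly, and then to deduce the foliation claim from the implicit function theorem once the Fr\'echet derivative $DH$ is shown to be nonzero. Writing the solution as $x(t)=(S(t),I(t),Q(t))$ and substituting $u=t+s$ in the integral, one has
$$H(x_t(\phi)) \;=\; 1 - S(t) - I(t-\kappa) + \int_{t-\kappa}^{t}\bigl(1-rS(u)\bigr)I(u)\,du,$$
so that
$$\frac{d}{dt}H(x_t(\phi)) \;=\; -\dot S(t) - \dot I(t-\kappa) + \bigl(1-rS(t)\bigr)I(t) - \bigl(1-rS(t-\kappa)\bigr)I(t-\kappa).$$

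Next I would insert the expression for $\dot S(t)$ from \eqref{eq:S-dyn} and that for $\dot I(t-\kappa)$ obtained by shifting the argument in \eqref{eq:I-dyn}. The crucial observation is that $-\dot S(t)$ and $-\dot I(t-\kappa)$ each contribute a term proportional to $r\varepsilon S(t-\tau-\kappa)I(t-\tau-\kappa)$ but with opposite signs, so all $\tau$-dependent contributions cancel. What remains, namely $rS(t)I(t)-I(t)-rS(t-\kappa)I(t-\kappa)+I(t-\kappa)$, is exactly cancelled by the two boundary terms produced by differentiating the integral, giving $\tfrac{d}{dt}H(x_t(\phi))\equiv 0$.

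For the foliation claim, $H$ is real-analytic on $\mathcal{C}$ because its defining expression consists of point evaluations together with a polynomial in $\phi$ integrated over a fixed compact interval. Its Fr\'echet derivative at $\phi$ acts on $\psi=(\psi_S,\psi_I,\psi_Q)$ by
$$DH(\phi)\psi \;=\; -\psi_S(0) - \psi_I(-\kappa) + \int_{-\kappa}^{0}\!\bigl[(1-r\phi_S(s))\psi_I(s) - r\phi_I(s)\psi_S(s)\bigr]\,ds.$$
Choosing the constant direction $\psi_S\equiv -1,\ \psi_I\equiv 0,\ \psi_Q\equiv 1$, which lies in the tangent space $T_\phi\mathcal{C}$ cut out by the mass-conservation constraint $\psi_S+\psi_I+\psi_Q\equiv 0$, yields $DH(\phi)\psi = 1$ at every $\phi$. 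Hence $DH$ is a surjective bounded linear functional everywhere on $\mathcal{C}$, and the implicit function theorem in Banach spaces produces a smooth foliation of $\mathcal{C}$ by the codimension-one level sets $H^{-1}(c)$.

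The only real obstacle I anticipate is the algebraic bookkeeping in the cancellation step: because $H$ involves only the delay $\kappa$ while the governing equations involve both $\tau$ and $\kappa$, one must keep careful track of how the $\tau$-delayed mass flux appears in both $\dot S(t)$ and in the shifted $\dot I(t-\kappa)$ in order to confirm that its net contribution vanishes. Once this is organized, the verification of the conservation identity and of the non-degeneracy of $DH$ are both short computations.
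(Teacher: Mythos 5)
Your verification of the conservation law is correct and is essentially the computation the paper performs: the $r\varepsilon$-terms from $-\dot S(t)$ and $-\dot I(t-\kappa)$ cancel, and the remaining terms are absorbed by the boundary terms of the integral. Your formula for the Fr\'echet derivative,
\[
DH(\phi)\psi = -\psi_S(0)-\psi_I(-\kappa)+\int_{-\kappa}^{0}\bigl[(1-r\phi_S(s))\psi_I(s)-r\phi_I(s)\psi_S(s)\bigr]\,ds,
\]
is also the right one (the paper's displayed version carries a stray factor of $r$ in front of the first integral), and you are right to check that the test direction lies in the tangent space of $\mathcal{C}$.

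There is, however, a genuine error in the last step. With $\psi_S\equiv-1$, $\psi_I\equiv 0$ you get
\[
DH(\phi)\psi = -\psi_S(0) + \int_{-\kappa}^{0} r\,\phi_I(s)\,ds = 1 + r\int_{-\kappa}^{0}\phi_I(s)\,ds,
\]
not $1$: you dropped the integral contribution of $\psi_S$ against $-r\phi_I$. Since the proposition is stated on all of $\mathcal{C}$, where only mass conservation (and not positivity) is imposed, $\phi_I$ may be negative and this quantity can vanish, e.g.\ whenever $\int_{-\kappa}^{0}\phi_I = -1/r$. So your single constant direction does not establish surjectivity of $DH(\phi)$ at every $\phi\in\mathcal{C}$. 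The standard fix, and the one the paper uses, is to localize the test function: take $\psi_I\equiv 0$ and $\psi_S$ supported in $[-\delta,0]$ with $\psi_S(0)=1$ and $0<r\delta<1/\sup|\phi_I|$, so that the boundary term $-\psi_S(0)=-1$ dominates the integral term in absolute value and $DH(\phi)\psi\neq 0$. With that replacement (or by exhibiting a second direction at the exceptional $\phi$), the rest of your argument — submersion, hence a smooth codimension-one foliation by level sets — goes through.
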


\begin{proof}
Writing $x(t;\phi)=(S(t),I(t),Q(t))$ for $t\ge0$, we have 
\[
\frac{d}{dt}H(x_{t}(\phi))=-\dot{S}(t)-\dot{I}(t-\kappa)+(1-rS(t)I(t)-(1-rS(t-\kappa))I(t-\kappa) ,
\]
which one checks is equal to $0$ by plugging into Eqs.~\eqref{eq:S-dyn}\textendash \eqref{eq:Q-dyn}.
To show that the level sets of $H$ are codimension 1 submanifolds,
it suffices to check, by the Implicit Function Theorem, that $D_\phi H$, the derivative of $H$, is surjective at each $\phi$. This is true, as for any $\phi$ there exits a $\psi=(\psi_S,\psi_I,\psi_Q)$ such that 
\[
(D_\phi H) \psi = -\psi_S(0) -\psi_I(-\kappa) +r\negmedspace \int\limits _{-\kappa}^{0}(1-r\phi_{S}(s))\psi_{I}(s)\mbox{d}s -r\negmedspace \int\limits _{-\kappa}^{0}\phi_{I}(s)\psi_{S}(s)\mbox{d}s\neq 0,
\]
For example, choose $\psi_I=\hat{0}$, $\psi_S(\theta)=0$ for all $\theta\in[-\tau-\kappa,-\delta]$ and $\psi_S(\theta)=1-\theta/\delta$ for $\theta\in[-\delta,0]$, where $0<r\delta<1/ \sup \phi_I$. 
\end{proof}

For fixed $r$ and $ \kappa$, we let $\mathcal{F}=\mathcal{F}^{r,\kappa}$
denote the foliation given by Proposition \ref{prop:foliation},
and let $\mathcal{F}_{q}:=H^{-1}(q)$. Then $\mathcal{F}_{q}$
is invariant under the semi-flow, i.e., for $\phi\in\mathcal{F}_{q}$,
$x_{t}(\phi)\in\mathcal{F}_{q}$ for all $t\ge0$. We consider below
the intersection of $\mathcal{F}_q$ with the set of equilibrium points
for arbitrary $\kappa$ and $q$.

\begin{thm}
\label{thm:candidate}
For fixed $r, \kappa, q$, we let $\mathcal{F}=\mathcal{F}^{r,\kappa}$, 
and consider $\mathcal{F}_q$.
\begin{enumerate}
\item[(1)] Then $\mathcal{F}_{q}\cap\mathcal{E}_{0}=\{\widehat{u(q)}\}$, where $u(q)=(1-q,0,q)$.
\item[(2)] Fixing additionally $p,\tau$, which determines
  $\mathcal{E}_{I}=\{\phi_S =
[r(1-\varepsilon)]^{-1}\}$, we have $\mathcal{F}_{q}\cap\mathcal{E}_{I}=\{\widehat{v(q)}\}$, where $v(q)=(v_S,v_I(q),v_Q(q))$ and
\[
v_{I}(q)=\frac{1-\varepsilon}{1-\varepsilon+\varepsilon\kappa}\left(q_{c}-q\right),
\] 
\begin{equation}
v_{Q}(q)=\frac{\varepsilon\kappa}{1-\varepsilon+\varepsilon\kappa}\left(q_{c}-q\right).
\label{eq:def-h}
\end{equation}
\end{enumerate}
\end{thm}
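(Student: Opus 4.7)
The plan is to substitute constants into the explicit formula \eqref{eq:H} for $H$, use the respective equilibrium constraints to reduce everything to a single scalar linear equation, and read off its unique solution. The key structural observation is that $H(\phi)$ depends on $\phi$ only through $\phi_S(0)$, $\phi_I(-\kappa)$, and an integral of $(1-r\phi_S)\phi_I$; so once $\phi$ is constant and $\phi_S$ is prescribed, the condition $H=q$ becomes a single linear equation in the scalar $\phi_I$, while $\phi_Q$ is then recovered from the constraint $\phi_S+\phi_I+\phi_Q=1$ built into $\mathcal{C}$.

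For part (1), let $\hat u \in \mathcal{E}_0$ with $u=(u_S,0,u_Q)$. Since $u_I\equiv 0$, both $\phi_I(-\kappa)$ and the integrand in \eqref{eq:H} vanish, leaving $H(\hat u)=1-u_S$. Setting $H(\hat u)=q$ forces $u_S = 1-q$, and conservation of mass gives $u_Q = q$. Hence $\mathcal{F}_q\cap\mathcal{E}_0=\{\widehat{u(q)}\}$ with $u(q)=(1-q,0,q)$.

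For part (2), let $\hat v \in \mathcal{E}_I$, so $v_S=[r(1-\varepsilon)]^{-1}$ and therefore $1-rv_S = -\varepsilon/(1-\varepsilon)$. Since $v$ is constant, the integral in \eqref{eq:H} evaluates to $\kappa(1-rv_S)v_I$, so
\[
H(\hat v) \;=\; (1 - v_S) \;-\; v_I\Bigl(1 + \tfrac{\varepsilon\kappa}{1-\varepsilon}\Bigr) \;=\; q_c \;-\; v_I\cdot\frac{1-\varepsilon+\varepsilon\kappa}{1-\varepsilon},
\]
using $1 - v_S = q_c$ from the definition of $q_c$ in Theorem~\ref{thm:E0}. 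The coefficient of $v_I$ is strictly positive, so equating the right-hand side with $q$ uniquely determines the stated formula for $v_I(q)$; the value $v_Q(q)$ then follows from $v_Q = 1-v_S-v_I = q_c - v_I$ by direct simplification.

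The only mild subtlety is that $H$ does not depend on $\phi_Q$ at all, which at first glance could suggest $\mathcal{F}_q\cap\mathcal{E}_I$ is a line rather than a point; this is resolved by the sum-to-one constraint defining $\mathcal{C}$, which pins down $v_Q$ from $v_S$ and $v_I$. Beyond that, the argument is elementary bookkeeping and I do not anticipate a substantive obstacle.
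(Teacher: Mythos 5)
Your method is exactly the ``straightforward computation'' the paper alludes to (it gives no written proof), and part (1) together with the derivation of $v_I(q)$ is correct: with $\phi$ constant and $1-rv_S=-\varepsilon/(1-\varepsilon)$, the condition $H=q$ reads $q_c - v_I\,\frac{1-\varepsilon+\varepsilon\kappa}{1-\varepsilon}=q$, which yields the stated $v_I(q)$, and uniqueness follows since $H$ restricted to each equilibrium line is affine with nonzero slope. Your observation that the $\phi_Q$-independence of $H$ is compensated by the sum-to-one constraint in $\mathcal{C}$ is also the right way to see why the intersections are single points.

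However, your last step is not right as written. From $v_Q=q_c-v_I$ one gets
\[
v_Q \;=\; q_c-\frac{1-\varepsilon}{1-\varepsilon+\varepsilon\kappa}(q_c-q)
\;=\;\frac{\varepsilon\kappa}{1-\varepsilon+\varepsilon\kappa}(q_c-q)\;+\;q,
\]
which does \emph{not} coincide with the displayed formula \eqref{eq:def-h} unless $q=0$; the claim that \eqref{eq:def-h} ``follows by direct simplification'' is therefore false. The discrepancy is not in your computation but in the theorem as printed: the formula \eqref{eq:def-h} is missing the trailing ``$+q$'', as one can check by noting that it would give $v_S+v_I+v_Q=1-q\neq 1$, contradicting membership in $\mathcal{C}$, and by comparing with the $\sigma>0$ analogue (Theorem \ref{thm:candidate-sigma}), whose $v_Q(\eta,q)$ does carry the extra $+q$, and with the use of $v_Q$ in Sec.~\ref{subsec:endemic-nonlin}. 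You should state the corrected formula explicitly rather than asserting that your (correct) expression reduces to the printed one.
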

These assertions follow from straightforward computations. 

We remark on how the leaves of $\mathcal{F}^{r,\kappa}$ 
vary with $\kappa$.
Setting $\kappa=0$, we see from (\ref{eq:H}) that $\mathcal{F}_{q}=\{\phi_{Q}(0)=q\}$.
For small $\kappa>0$, it is easy to see that the leaves $\mathcal{F}_{q}^{r,\kappa}$
are ``close'' to those of $\mathcal{F}_{q}^{r,0}$. 
Observe from the formulas above that with $p$ and $\tau$ fixed, 
$v_I(q)$ decreases monotonically as $\kappa$ increases.
Indeed the leaf $\mathcal{F}^{r,\kappa}_{q}$ ``bends'' away from 
$\mathcal{F}^{r,0}_{q}$ increasingly, its intersection with $\mathcal{E}_{I}$ tending 
to $(\widehat{1-q_{c}},\hat{0},\hat{q}_{c})$ as $\kappa\to\infty$.

%%%%%%%%%%%%%%%%%%%%%%%%%

\subsection{Discussion\label{subsec:Discussion}}

Fixing $r,\kappa$ and starting from $\phi$ with $\phi_{I}\ll1$,
one asks what the future holds. For fixed $p,\tau$,  suppose $\phi\in\mathcal{F}_{q}$
for some $q$. If $q>q_{c}$, then $x_{t}(\phi)\to \widehat{u(q)}\in\mathcal{F}_{q}\cap\mathcal{E}_{0}$
as $t\to\infty$ by Theorem \ref{thm:E0}, if $\phi$ was chosen in
some sufficiently small neighborhood of $\mathcal{E}_{0}$. We focus
therefore on the case $q<q_{c}$, for which we have to expect
$x(t;\phi)$ to move away from the set of disease-free equilibria
$\mathcal{E}_{0}$.

One possibility is for $x_{t}(\phi)$ to tend to $\widehat{v(q)}$,
the unique point in $\mathcal{F}_{q}\cap\mathcal{E}_{I}$, as $t\to\infty$.
It is difficult to determine if, or under what conditions, this occurs;
such nonlocal dynamical behaviors are very challenging to analyze.
We have some evidence that this is not an unreasonable expectation,
at least for smaller values of $\kappa$, and confirmed this with
 numerical simulations; see Figure 6.1.

Not all endemic equilibria identified in Sec.~\ref{subsec:Equilibrium}
are reachable if one starts from an initial condition $\phi$ near
$\tilde{\mathcal{E}}_{0}$. For each $r,p,\tau,\kappa$, we define the
set of \textit{reachable endemic equilibria} $\mathcal{E}_{I}^{r,p,\tau,\kappa}$
to be those equilibrium points in $\mathcal E_I$ that are, in principle, 
reachable starting from a biologically realistic initial condition, i.e.,
\begin{eqnarray*}
\mathcal{E}_{I}^{r,p,\tau,\kappa} & = & \{\hat{v}, v=(v_S,v_I(q),v_Q(q)), 
q \in [0,1], q <q_c\}\ ,
\end{eqnarray*}
where $v_S,v_I(q)$ and $,v_Q(q)$ are as in Theorem \ref{thm:candidate}.

In the scenario that $x_{t}(\phi)$ tends to $\hat{v}$, Theorem
\ref{thm:candidate} tells us it is advantageous to use a larger $\kappa$,
for the longer one keeps infectious hosts in isolation, the smaller
the $I$-component $v_I$ of the asymptotic state $\hat{v}$. If
$\hat{v}$ is unstable, then convergence to it
is unlikely, and the structures that emerge from $\hat{v}$
after it loses stability become candidates for the $\omega$-limit
set of $\phi$, which we know is nonempty if $x(t;\phi)$
is bounded (by the remark at the end of Sec. \ref{subsec:Equilibrium}). 
This motivates the eigenvalue
analysis of the equilibria in $\mathcal{E}_{I}$ in the next section.

\begin{figure}
\noindent\begin{minipage}[t]{1\columnwidth}%
\includegraphics[width=1\linewidth]{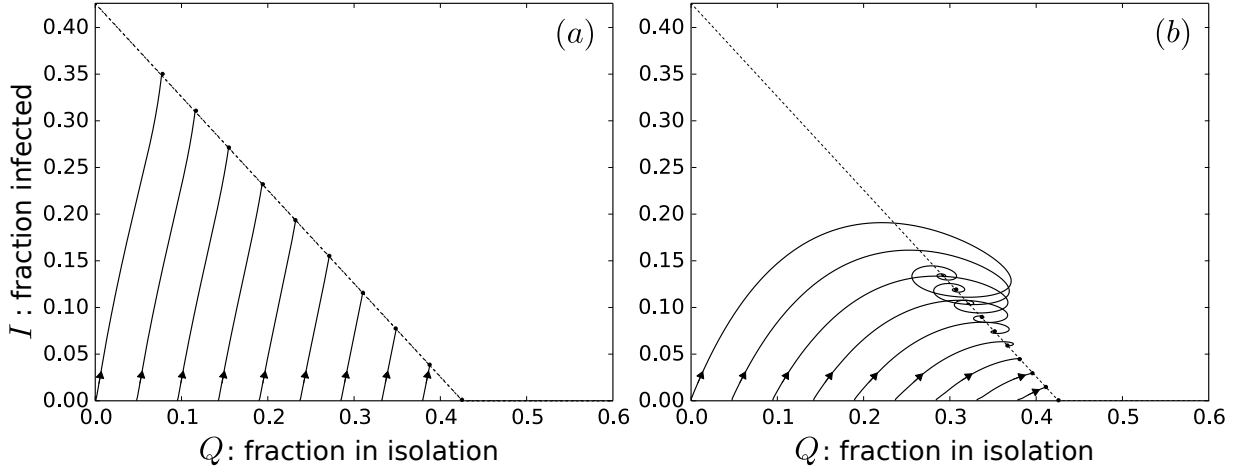}%
\end{minipage}\caption{\label{fig:reach}Convergence to $\mathcal{\tilde{E}}_{I}$ for initial
conditions close to $\tilde{\mathcal{E}}_{0}$. Here $p=\frac{1}{2},\tau=0.5$.
Initial conditions are of the form $\phi(\theta)=(1,0,0)$ for all
$\theta<0$ and $\phi(0)=(0.999-q,0.001,q)$ for 10 equidistant
values of $0\leq q\leq q_{c}$. Panel (a): $\kappa=0.5$. Panel
(b): $\kappa=5$. Curves show trajectories in direction of arrows
projected onto $(Q,I)$-space. }
\end{figure}

\section{The case of an endemic infection \label{sec:Case-of-endemic-infection} }

In Secs. \ref{subsec:endemic-Lin} and \ref{subsec:endemic-nonlin},
we study the dynamics close to $\mathcal{E}_{I}$, the set of endemic
equilibria defined in Sec. \ref{subsec:Equilibrium}. For fixed $r>1,p\in(0,1)$
and $\tau=0$, we give in Sec. \ref{subsec:endemic-Lin} a complete
bifurcation analysis of each equilibrium point in $\mathcal{E}_{I}$
as $\kappa$ increases. These results remain valid for small $\tau>0$.
In Sec.~\ref{subsec:endemic-nonlin}, we deduce from the linear analysis
above nonlinear behaviors in neighborhoods of these equilibria. 

While Secs. \ref{subsec:endemic-Lin} and \ref{subsec:endemic-nonlin}
are concerned with the dynamical picture near an endemic equilibrium irrespective of how one gets there, Sec. \ref{subsec:endemic-relev} addresses the following very pertinent question: Given an initial condition $\phi$ with small $\phi_{I}>0$, 
if one is unable to control the outbreak, which $\kappa$, i.e.,
what durations of isolation, will best mitigate the severity of the infection?
 As we will show, the dynamical landscape
 is quite complex. Results of numerical computations will be presented 
 to clarify the situation.

%%%%%%%%%%%%%%%%%%%%%%%%%%%%%%%

\subsection{Linear analysis at $\mathcal{E}_{I}$\label{subsec:endemic-Lin}}

Let $r$ and $\varepsilon$ be fixed throughout. We parametrize $\mathcal{E}_{I}$
by $\widehat{w(q)},\,q\in\mathbb{R},$ where $w\!\left(q\right)=\left(1-q_{c},q_{c}-q,q\right)$ 
and study the linearized equation at each point. Clearly, $0$ is
an eigenvalue, as $\mathcal{E}_{I}$ is a line of equilibria. We have
the following result for $\kappa=0$. 
\begin{prop}
\label{prop:L1-lin} Let $\tau$ and $p\in(0,1)$ be fixed, and $\kappa=0$.
If $q\leq q_{c}$ then $\widehat{w(q)}$ is linearly
stable; otherwise it is linearly unstable.

Specifically, for $q<q_{c}$, the eigenvalue $\lambda=0$ has
multiplicity 1 and all other eigenvalues satisfy $Re\left(\lambda\right)<0$,
and one eigenvalue crosses the imaginary axis as $q$ increases
past $q_{c}$. 
\end{prop}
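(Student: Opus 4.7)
The plan is to apply Lemma~\ref{lem:characteristicf} at the endemic equilibrium $\widehat{w(q)}$, exploit the simplifications that occur when $\kappa = 0$, and reduce the spectral analysis to a variant of the argument carried out in the proof of Theorem~\ref{thm:E0}.

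First I would substitute $w_S = 1 - q_c = [r(1-\varepsilon)]^{-1}$, $w_I = q_c - q$, and $\kappa = 0$ into the characteristic function of Lemma~\ref{lem:characteristicf}. The last term of $\chi$ vanishes identically because of the factor $1 - e^{-\lambda\kappa}$, and the two middle terms combine since $e^{-(\tau+\kappa)\lambda} = e^{-\tau\lambda}$; this leaves
\[
\chi(\lambda, \widehat{w(q)}) \;=\; \lambda\bigl(\lambda + 1 - C(q)\,(1 - \varepsilon e^{-\tau\lambda})\bigr),
\]
where $C(q) := r(w_S - w_I) = r(1 - 2q_c + q)$. The outer factor $\lambda$ accounts for the tangential eigenvalue along the line of equilibria $\mathcal{E}_I$, so transverse stability is governed by the inner factor. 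Structurally this is exactly the nontrivial factor analyzed in the proof of Theorem~\ref{thm:E0}, with the coefficient $r(1-q)$ there replaced by $C(q)$.

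Next, I would import the imaginary-axis analysis from Theorem~\ref{thm:E0} almost verbatim. Setting $\lambda = i\omega$ and separating real and imaginary parts yields the modulus relation $\omega^{2} = (C(q)\varepsilon)^{2} - (C(q)-1)^{2}$ and the phase relation $\tan(\tau\omega) = \omega/(C(q) - 1)$. The right-hand side of the modulus relation is bounded above by $\varepsilon^{2}/(1-\varepsilon^{2})$, so $\omega$ may be confined to $(-\pi/\tau,\pi/\tau)$ on which $\tan(\tau\omega)$ is strictly monotone; the monotonicity argument from the proof of Theorem~\ref{thm:E0} then rules out any solution with $\omega \ne 0$. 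A direct check shows that $\lambda = 0$ is a root of the nontrivial factor if and only if $C(q)(1-\varepsilon) = 1$, which happens precisely at $q = q_c$.

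Finally, the direction and multiplicity of the crossing follow from the Implicit Function Theorem applied to $F(\lambda,q) := \lambda + 1 - C(q)(1 - \varepsilon e^{-\tau\lambda})$ at $(\lambda, q) = (0, q_c)$, which yields the transversal rate
\[
\left.\frac{d\lambda}{dq}\right|_{(0,q_{c})} \;=\; \frac{r(1-\varepsilon)^{2}}{1 - \varepsilon - \varepsilon\tau},
\]
strictly positive under the standing assumption $p < 1$ (indeed $\varepsilon(1+\tau) = pe^{-\tau}(1+\tau) < 1$ since $1 + \tau \le e^{\tau}$). Thus the simple real eigenvalue passes from the left to the right half-plane as $q$ increases through $q_c$. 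Anchoring the continuity argument at $\tau = 0$, where the nontrivial factor is the polynomial $\lambda + 1 - C(q)(1-\varepsilon)$ with unique root $\lambda_1 = (q - q_c)/(1 - q_c)$, one concludes that for $q < q_c$ every nontrivial eigenvalue has negative real part, while for $q > q_c$ there is exactly one positive real eigenvalue and no other imaginary-axis crossings. The main technical obstacle, as in Theorem~\ref{thm:E0}, is the delicate tangent-function bookkeeping needed to exclude spurious imaginary crossings for $\omega \ne 0$; the fact that $C(q)$ plays here the same structural role as $r(1-q)$ there lets me reuse that analysis rather than redo it from scratch.
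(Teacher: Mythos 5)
Your proposal is correct and follows essentially the same route as the paper: setting $\kappa=0$ in Lemma~\ref{lem:characteristicf} kills the last term, collapses the two delayed factors, and yields $\lambda\bigl(\lambda+1-r(1+q-2q_{c})(1-\varepsilon e^{-\tau\lambda})\bigr)=0$, which the paper likewise observes has the same form as Eq.~\eqref{eq:I=00003D00003D0-hyperbolic_spec} and then handles "by similar arguments." Your added details (the explicit coefficient $C(q)=r(w_S-w_I)$, the transversality rate $r(1-\varepsilon)^{2}/(1-\varepsilon-\varepsilon\tau)>0$ giving the reversed crossing direction relative to Theorem~\ref{thm:E0}, and the check $\varepsilon(1+\tau)<1$) are all accurate and merely make explicit what the paper leaves implicit.
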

The proof of Proposition~\ref{prop:L1-lin} is analogous to the proof
of the stability of disease-free equilibria in Theorem~\ref{thm:E0}.
More specifically, for the case $\kappa=0$, the corresponding characteristic
equation is 
\begin{equation}
\lambda\left(\lambda+1-r\left(1-q-2(q_{c}-q)\right)\left(1-\varepsilon e^{-\tau\lambda}\right)\right)=0,\label{eq:d1kappa0-spec}
\end{equation}
which has the same form as Eq.~(\ref{eq:I=00003D00003D0-hyperbolic_spec})
from Theorem~\ref{thm:E0}. Therefore, the statement of Proposition~\ref{prop:L1-lin}
can be proven by similar arguments. 

We remark that the stability persists at least for small values of
$\kappa$ for all points $\widehat{w(q)}$, $q \ne q_c$. Moreover, a uniform
estimate for such $\kappa$ can be obtained by excluding a neighborhood
of the point $\widehat{w(q_c)}$. 

For $q<q_{c}$, even as Proposition \ref{prop:L1-lin} tells us
that $\widehat{w(q)}$ is stable for small $\kappa$, there
is no guarantee that it will not destabilize for larger values of
$\kappa$. We first give a rigorous analysis for the case $\tau=0$,
fixing $q$ and letting $\kappa$ increase, as there are
standard techniques for investigating asymptotic properties of the
spectrum as the delay increases. We refer to \cite{Lichtner2011}
for a general overview of the concepts used in the proof of the following
theorem. 
\begin{thm}
\label{thm:L1-lin-Hopf} Let $0<p<1$ and $\tau=0$ be fixed. Then,
there exist $q_{h}^{-},q_{h}^{+}$ for which the following hold:
If $q\in\left[q_{h}^{-},q_{h}^{+}\right]$ and $q<q_{c}$,
then $\widehat{w(q)}$ is linearly stable for all $\kappa\geq0$.
For $q$ such that $q\notin\left[q_{h}^{-},q_{h}^{+}\right]$
and $q<q_{c}$, we have the following.

\begin{enumerate}
\item There exists $\kappa_{0}(q)$ such that $\widehat{w(q)}$ is linearly
stable for $\kappa\leq\kappa_{0}(q)$ and linearly unstable for
$\kappa>\kappa_{0}(q)$. 
\item For $\kappa=\kappa_{0}(q)$, the linearization at $\widehat{w(q)}$
possesses a pair of purely imaginary eigenvalues $\pm i\Omega(q)$,
$\Omega(q)>0$, crossing the imaginary axis with positive speed
as $\kappa$ increases. 
\item For each $\kappa_{m}(q)=\kappa_{0}(q)+m2\pi/\Omega(q)$, $m\in\mathbb{N}$,
$\widehat{w(q)}$ possesses a pair of purely imaginary eigenvalues $i\Omega(q)$,
crossing the imaginary axis with positive speed. 
\end{enumerate}
For $q<q_{c}$, these are the only bifurcations as $\kappa$ is
varied. 
\end{thm}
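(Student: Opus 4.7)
The plan is to carry out the classical procedure for detecting purely imaginary eigenvalues of a single-delay DDE, specialized to the line of equilibria $\mathcal{E}_I$. By Lemma~\ref{lem:characteristicf} applied to $\widehat{w(q)}=(\widehat{1-q_c},\widehat{q_c-q},\hat q)$ with $\tau=0$, the identity $r(1-q_c)(1-\varepsilon)=1$ collapses the characteristic equation to
\[
\chi(\lambda;\kappa)\;=\;\lambda^{2}+A\lambda\bigl(1-\varepsilon e^{-\kappa\lambda}\bigr)+A\varepsilon\bigl(1-e^{-\kappa\lambda}\bigr)=0,\qquad A:=r(q_c-q)>0.
\]
The root $\lambda=0$ is forced by $\mathcal{E}_I$ being a line of equilibria, and the quick check $\partial_\lambda\chi(0;\kappa)=A(1-\varepsilon+\varepsilon\kappa)>0$ shows it is always simple, so no fold can occur as $\kappa$ varies. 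All bifurcations must therefore come from purely imaginary pairs $\pm i\Omega$, $\Omega>0$.

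\textbf{Key steps.} Substituting $\lambda=i\Omega$ into $\chi=0$ and separating real and imaginary parts gives two relations in $\Omega$ and $x:=\kappa\Omega$. Eliminating $\Omega$ and using $\sin^2 x=(1-\cos x)(1+\cos x)$ reduces the system to a single scalar equation of the form $A=A(\cos x)$, where $A(\,\cdot\,)$ is strictly monotone on $(-1,1)$ (its derivative in $c=\cos x$ is $\varepsilon/(1-\varepsilon c)^2>0$); the sign constraint $\Omega>0$ confines $x$ to $(\pi,2\pi)\ (\mathrm{mod}\ 2\pi)$. From the monotonicity one reads off the precise range of $A$ for which the scalar equation has a solution, and pulling back through $A=r(q_c-q)$ defines the thresholds $q_h^\pm$. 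For $q\in[q_h^-,q_h^+]$ no purely imaginary eigenvalue appears for any $\kappa$; since $\widehat{w(q)}$ is linearly stable at $\kappa=0$ by Proposition~\ref{prop:L1-lin} and the spectrum varies continuously in $\kappa$, stability persists for all $\kappa\ge 0$. For $q$ outside that window but still with $q<q_c$, the smallest admissible $x_0\in(\pi,2\pi)$ determines $\Omega(q)$ and $\kappa_0(q)=x_0/\Omega(q)$, proving (1). The $2\pi/\Omega$-periodicity of $e^{-\kappa\lambda}\big|_{\lambda=i\Omega}$ in $\kappa$ then forces the same pair to re-cross at each $\kappa_m(q)=\kappa_0(q)+2\pi m/\Omega(q)$, giving (3). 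Transversality in both (2) and (3) follows by implicit differentiation of $\chi(\lambda(\kappa);\kappa)=0$ at $(\lambda,\kappa)=(i\Omega(q),\kappa_m(q))$ and a sign analysis of $\mathrm{Re}(d\lambda/d\kappa)$, which after simplification collapses to a strictly positive rational expression in $A,\varepsilon$ and trigonometric terms evaluated at $x_0$.

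\textbf{Main obstacle.} The real technical hurdle is ensuring that $\kappa_0(q)$ is genuinely the \emph{first} loss of stability, and more generally that the Hopf crossings listed above are the \emph{only} bifurcations as $\kappa$ grows. One must in particular exclude eigenvalues migrating from $\mathrm{Re}\,\lambda=-\infty$ into the right half plane through the large-delay limit. This is the role of the asymptotic spectrum framework of \cite{Lichtner2011}: identify the \emph{strong spectrum} of $\chi$ (the bounded roots persisting as $\kappa\to\infty$, here governed by $\lambda^{2}+A\lambda+A\varepsilon=0$, whose roots lie in the open left half plane since $A,A\varepsilon>0$) and the rescaled \emph{chain spectrum} of high-frequency roots, and verify that both remain in the closed left half plane for $\kappa\in[0,\kappa_0(q))$. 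Once that uniform control is in place, the enumeration from the scalar Hopf equation together with the transversality calculation delivers the statement in full; the remaining work — the algebra behind the scalar reduction and the sign of $\mathrm{Re}(d\lambda/d\kappa)$ — is routine but bookkeeping-heavy, because both the $\lambda$-linear and the zero-order terms in $\chi$ carry a factor of $e^{-\kappa\lambda}$.
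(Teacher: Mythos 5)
Your proposal is sound and, for the delicate part of the argument, follows the same route as the paper: both rest on the large-delay spectral decomposition of \cite{Lichtner2011} into the strong spectrum (here the roots of $\lambda^{2}+A\lambda+A\varepsilon=0$, stable since $A,A\varepsilon>0$) and the asymptotic continuous spectrum, which controls the limit $\kappa\to\infty$ and rules out roots entering the right half plane other than through the detected crossings. Where you genuinely differ is in how the Hopf curves are located. The paper never solves for the purely imaginary roots explicitly; it computes $h(q)=\partial_{\omega}^{2}|Y|(0)$ and reads off stability from its sign, and it merely asserts the existence of the first crossing $\kappa_{0}(q)$ and the transversality. Your explicit $D$-subdivision --- separating real and imaginary parts, eliminating $\Omega$ via $\Omega=-\varepsilon\sin x/(1-\varepsilon\cos x)$, and reducing to $A=\varepsilon(1+c)/\bigl((1+\varepsilon)(1-\varepsilon c)\bigr)$ with $c=\cos x$ --- is correct (the monotonicity $dA/dc=\varepsilon/(1-\varepsilon c)^{2}$ checks out) and is in fact more informative: it shows the crossing frequency $\Omega(q)$ is unique for each $q$, which is exactly what makes the enumeration $\kappa_{m}=\kappa_{0}+2\pi m/\Omega$ exhaustive, and it produces $\kappa_{0}(q)$ constructively rather than by a continuity argument.

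Two caveats. First, carried to completion your reduction gives purely imaginary roots precisely for $0<A<2\varepsilon/(1-\varepsilon^{2})$, i.e.\ for $q_{c}-\tfrac{2\varepsilon}{r(1-\varepsilon^{2})}<q<q_{c}$; the set of $q$ that remain stable for all $\kappa$ is therefore a half-line with a single finite endpoint $q_{h}^{+}=q_{c}-\tfrac{2\varepsilon}{r(1-\varepsilon^{2})}$, not a bounded interval $[q_{h}^{-},q_{h}^{+}]$. This does not agree with the paper's two-sided formula \eqref{eq:Hopfbounds}; the discrepancy traces to the paper's expression for $Y(\lambda)$, whose numerator carries a $q$-dependence that the reduced characteristic function $\lambda^{2}+A\lambda(1-\varepsilon e^{-\kappa\lambda})+A\varepsilon(1-e^{-\kappa\lambda})$ --- which depends on $q$ only through $A$ --- cannot produce. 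Report your window as it comes out of the computation rather than forcing it into the interval form of the statement. Second, the transversality $\mathrm{Re}(d\lambda/d\kappa)>0$ at each crossing is asserted but not executed in your sketch (nor in the paper); it requires $\partial_{\kappa}\chi=A\varepsilon\lambda(\lambda+1)e^{-\kappa\lambda}$ together with an explicit sign analysis of $-\partial_{\kappa}\chi/\partial_{\lambda}\chi$ at $\lambda=i\Omega$, so that step must actually be written out for the proof to be complete.
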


The results of Theorem 9 carry the following biological interpretation: Suppose
we find ourselves near an endemic equilibrium. How we got there is of no concern
--  be it due to natural calamity, large stochastic fluctuations, viral mutation -- what
matters is that we are there, and the question is: what are the effects of 
prolonged periods of isolation? Theorem 9 gives a complete answer to this
question on the linear level.

%{\color{red} \noindent
%{\bf Biological Meaningful Parameters:} In the above result, we didn't restrict the statement to the biologically relevant case. We do so now.  As $\tau = 0$ imposing that $q_c \geq 0$ we obtain
%$$
%p \le p_c.
%$$ 
%Next, the interval $\left[q_{h}^{-},q_{h}^{+}\right]$ is degenerated at $p=p_c$. Indeed, then  
%$q_c = 0$ and Eq. (\ref{eq:Hopfbounds}) yields
%$q_{h}^{-} = q_{h}^{+} < 0 
%$.
%Hence, by continuity when $p$ is slightly smaller than $p_c$ any $0<q<q_c$ falls into the statements (1)-(3). 
%We further remark that these results persist at least for small $\tau>0$. However, to remind the reader, $p > p_c$ and $\tau>\tau_c$ imply $q_c > 0$ independently of $p$ specifically.  
%}

\begin{proof}[Proof of Theorem \ref{thm:L1-lin-Hopf}]
We compute the eigenvalues of the linearization at $\widehat{w(q)}$.
By Lemma \ref{lem:characteristicf}, the characteristic equation at
$\widehat{w(q)}$ reads
\begin{align}
\chi\left(\lambda,\widehat{w(q)},\kappa\right) = & \lambda\left(\lambda+1-r\left(1-q_{c}\right)\left(1-\varepsilon e^{-\tau\lambda}\right)+r\left(q_{c}-q\right)\left(1-\varepsilon e^{-\tau\lambda}Y(\lambda)\right)\right) \label{eq:spec-end} \\
 & +r\left(q_{c}-q\right)\varepsilon e^{-\tau\lambda}\left(1-Y(\lambda)\right), \nonumber
\end{align}
where we use the notation $e^{-\lambda\kappa}=:Y(\lambda)$. Note that the solution $\lambda_{0}=0$ of \eqref{eq:spec-end} corresponds to the direction
tangential to the line $\mathcal{E}_{I}$. We use the result from
\cite{Lichtner2011}, which describes the asymptotic properties of
the spectrum for large delay (here $\kappa$). More specifically,
the spectrum for large $\kappa$ can be described by two parts: the
strong spectrum $\lambda$ such that $\lambda=\mathcal{O}\left(1\right)$
as $\kappa\to\infty$, which is given by the solutions of the equation
\begin{equation}
\lambda\left(\lambda+1-r\left(1-q_{c}\right)\left(1-\Lambda\left(\lambda\right)\right)+r\left(q_{c}-q\right)\right)+r\left(q_{c}-q\right)=0,\label{eq:spec-strong}
\end{equation}
(Eq.~\eqref{eq:spec-end} for $Y=0$) with positive real parts, and
the so called asymptotic continuous spectrum with $\mbox{Re}\,\lambda=\mathcal{O}\left(1/\kappa\right)$
as $\kappa\to\infty$. For $\tau=0$, Eq.~\eqref{eq:spec-strong} has
the solutions 
\[
\lambda_{\pm}=\frac{1}{2}\left[-r\left(1-p\right)\left(q_{c}-q\right)\pm\sqrt{\left(q_{c}-q\right)\left(r^{2}\left(1-p\right)^{2}\left(q_{c}-q\right)-4p\right)}\right],
\]
satisfying $Re(\lambda)<0$ for all $q\in\left[0,q_{c}\right)$,
which means that the strong spectrum is absent. The asymptotic continuous
spectrum is given by $\lambda=-\frac{1}{\kappa}\gamma\left(\omega\right)+i\omega$
where $\gamma\left(\omega\right)=-\frac{1}{2}\log\left|Y(i\omega)\right|^{2}$,
and $Y(\cdot)$ can be computed by solving \eqref{eq:spec-end} with
respect to $Y$ (see more details in \cite{Lichtner2011,YanchukGiacomelli2017})
\begin{eqnarray*}
Y(\lambda) & = & \frac{\lambda^{2}+\lambda\left(1+r\left(1-q\right)+\frac{1}{1-\varepsilon}\Lambda\left(\lambda\right)\right)+r\left(q_{c}-q\right)\Lambda\left(\lambda\right)}{r\left(q_{c}-q\right)\Lambda\left(\lambda\right)\left(\lambda+1\right)}.
\end{eqnarray*}
It is straightforward to compute that $\gamma\left(0\right)=0.$ Moreover,
$\frac{\partial\left|Y\right|}{\partial\omega}(0)=0$ and $\frac{\partial^{2}\left|Y\right|}{\partial\omega^{2}}(0)=h(q)$
with 
\[
h\left(q\right)=\left(\frac{1-r\left(1-p+\left(p-2\right)q_{c}-q\right)}{pr\left(q_{c}-q\right)}\right)^{2}-\frac{2}{pr\left(q_{c}-q\right)}-1.
\]
Hence, $\frac{\partial^{2}Y}{\partial\omega^{2}}(0)$ changes sign
when $h(q)=0$. In particular, $h(q)<0$ corresponds to the so-called
modulational instability \cite{YanchukGiacomelli2017}. Simple analysis
of the function $h(q)$ shows that $h\left(q\right)\geq0$ for
all $q\in\left[q_{h}^{-},q_{h}^{+}\right]\neq\emptyset,$ where
\begin{equation}
\left(1-p^{2}\right)\left(q_{c}-q_{h}^{\pm}\right)=a+p\mp\sqrt{\left(a+p\right)^{2}-\left(1-p^{2}\right)a^{2}}\label{eq:Hopfbounds}
\end{equation}
and $a=p_c -p+\left(p-3\right)q_{c}$. In this case,
$\gamma\left(\omega\right)$ is concave and $\gamma\left(\omega\right)<0$
for all $\omega\in\mathbb{R}\setminus\left\{ 0\right\} .$ As a result,
there are no eigenvalues with positive real part for sufficiently
large $\kappa$. In contrast, for all $q\notin\left[q_{h}^{-},q_{h}^{+}\right]$
there exists an open set $I_{\Omega}:=\left(-\Omega,\Omega\right)\setminus\left\{ 0\right\} $
such that $\gamma(\pm\Omega)=0$ and the pseudo-continuous spectrum
$\gamma(\omega)>0$ for $\omega\in I_{\Omega}\setminus\left\{ 0\right\} $.
Hence, as follows from \cite{Lichtner2011} for large $\kappa$ there
exists at least one pair of complex conjugated eigenvalues with positive
real parts and nonzero imaginary parts. With the increasing of $\kappa$,
these eigenvalues have to cross the imaginary axis at $\pm i\Omega$.
We denote the corresponding value of $\kappa$, where this occurs
as $\kappa_{0}(q)$.

Hence, it holds that $\chi\left(\pm i\Omega(q),\widehat{w(q)},\kappa_{0}(q)\right)=0$
for some $\kappa_{0}(q)>0$. Then, for $\kappa_{m}(q)=\kappa_{0}(q)+m2\pi/\Omega(q)$,
$m\in\mathbb{N}$ it holds 
\[
\chi\left(\pm i\Omega(q),\widehat{w(q)},\kappa_{m}(q)\right)=\chi\left(\pm i\Omega(q),\widehat{w(q)},\kappa_{0}(q)\right)=0,
\]
since $\left.Y\left(\pm i\Omega\right)\right|_{\kappa_{m}}=e^{\mp i\Omega\kappa_{m}}=e^{\mp i\Omega\kappa_{0}}=\left.Y\left(\pm i\Omega\right)\right|_{\kappa_{0}}.$
This implies the existence of purely imaginary eigenvalues at all
values $\kappa_{m}(q)$, which form the diverging monotone sequence
of delay values for each point $\widehat{w(q)}$. 
\end{proof}

\begin{figure}[h]
\noindent %
\noindent\begin{minipage}[t]{1\columnwidth}%
\centering
\includegraphics[width=\linewidth]{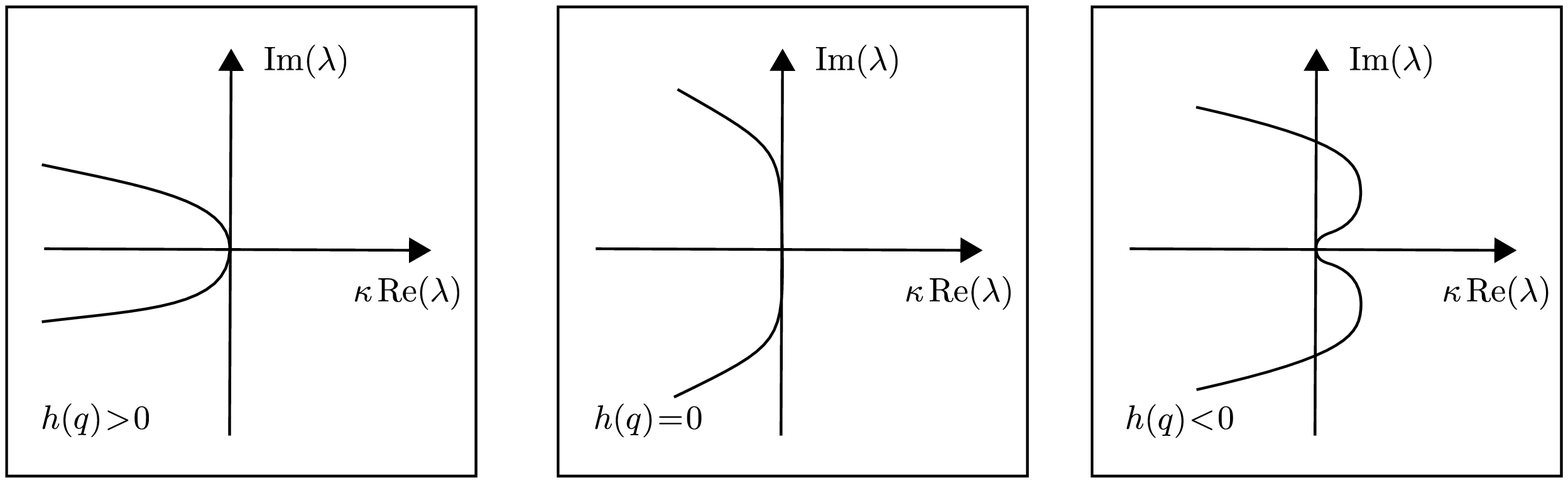}%
\end{minipage}\caption{Asymptotic spectral properties of a given equlibrium 
$\widehat{w(q)} \in\tilde{\mathcal{E}}_{I}$
for large values of $\kappa$. All eigenvalues close to the imaginary
axis lie on invariant curves with shapes indicated as above. In particular,
$\lambda=0$ is an eigenvalue and the distance between two neighboring
eigenvalues on these curves is approximately $2\pi/\kappa$. The shapes
can be distinguished by the sign of the auxilliary variable $h(q)$
introduced in the proof of Theorem~\ref{thm:L1-lin-Hopf}, where
the case $h(q)<0$ corresponds to the so called modulational instability,
see Ref.~\cite{Lichtner2011}. \label{fig:spec}}
\end{figure}

By standard theory, eigenvalues at $\widehat{w(q)}$ for small $\tau>0$
are close to those at $\tau=0$. Thus for each $\kappa$ away from
bifurcation points, $\widehat{w(q)}$ has the same number of unstable
eigenvalues for small $\tau$ as in the theorem above, with the size
of the allowed perturbation in $\tau$ depending on $\kappa$. See
Fig \ref{fig:spec} for graphic visualization of the spectral properties
described above. We have also computed numerically 
the regions of stability for a range of values of $q$ and $\kappa$; they are shown
 in Fig.~\ref{fig:stabilitykappa}.

\begin{figure}
\noindent %
\noindent\begin{minipage}[t]{1\columnwidth}%
\includegraphics[width=1\linewidth]{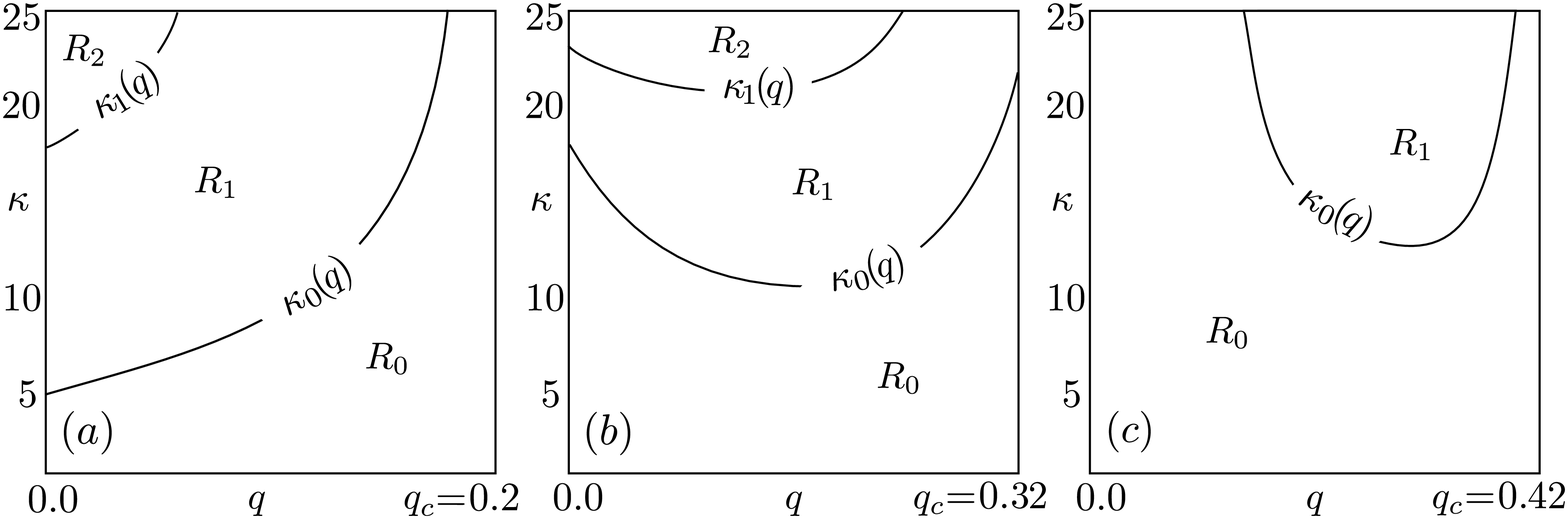}%
\end{minipage}\caption{\label{fig:stabilitykappa}Stability of the endemic equilibria $\widehat{w(q)}\in\tilde{\mathcal{E}}_{I}$
as function of $q$ and $\kappa$. Here $r=2.5$, $p=0.5$. Panel
(a): $\tau=0$, Panel (b): $\tau=0.2$, Panel (c): $\tau=0.5$. Each
panel shows $q\in[0,q_{c}]$ in the $x$-axis, and $\kappa\in[0,25]$
in the $y$-axis. The square is partitioned into $R_{0}=\{(q,\kappa):\kappa<\kappa_{0}(q)\}$,
$R_{i}=\{(q,\kappa):\kappa_{i-1}(q)<\kappa<\kappa_{i}(q)\},\ i=1,2$.
Drawing a vertical line through the square, one witnesses the implications
of Theorem \ref{thm:L1-lin-Hopf}; that is, for fixed $0\leq q<q_{c}$,
the equilibrium $\widehat{w(q)}$ is increasingly destabilized as $\kappa$
increases. Drawing a horizontal line, one sees at a glance the regions
of stability and instability for each $\kappa$. }
\end{figure}

%{\color{red} \bf{Change $\xi$ to $q$ in Figs 6.1 and 6.2}}

%%%%%%%%%%%%%%%%%%%%%%%%%%%%

\subsection{The nonlinear picture near $\mathcal{E}_{I}$\label{subsec:endemic-nonlin}}

As the semi-flow $T^{t}$ is $C^{1}$ (see Sec. 3.1), we have at
our disposal stable and unstable manifolds theory to further clarify
the nonlinear picture near $\mathcal{E}_{I}$ as was done for $\mathcal{E}_{0}$.
Additionally, we know from Sec. \ref{subsec:integral-of-motion}
that for each $r, \kappa, p,\tau$, there is a $T^{t}$-invariant, codimension
1 foliation $\mathcal{F}=\mathcal{F}^{r,\kappa}$ transversal
to $\mathcal{E}_{I}$. Below we let $\mathcal{F}_{q'}$ be the 
leaf of $\mathcal{F}$ passing through $\widehat{w(q)}$, so that
$q'$ and $q$ are related by $q=v_Q(q')$ where $v_Q$ is as in Theorem 7.

Consider $q<q_{c}$. By Proposition \ref{prop:L1-lin}, for $\kappa<\kappa_{0}$,
$\widehat{w(q)}$ is an attractive fixed point for the dynamics on $\mathcal{F}_{q'}$,
so that any orbit on $\mathcal{F}_{q'}$ coming to within a certain
distance of $\widehat{w(q)}$ (measured along $\mathcal{F}_{q'}$)
will converge to it. For $\kappa<\kappa_{0}$ and close enough
$\kappa_{0}$, we know from the complex conjugate eigenvalues at $\widehat{w(q)}$
that any such trajectory will exhibit damped oscillatory behavior
as it tends to its endemic equilibrium. Though not necessarily the
case, this will likely be reflected also in $I(t)$, the fraction
of population infectious. Though Theorem 9 cannot
be applied directly to the situation depicted in Fig 6.1(b), the presence 
of complex eigenvalues  is consistent with the way some of the trajectories 
spiral toward their endemic equilibria.

At $\kappa=\kappa_{0}\left(q\right)$, a Hopf bifurcation occurs
at $\widehat{w(q)}$. Though technical conditions are difficult to check,
in a generic super-critical Hopf bifurcation what happens is that
for $\kappa$ just past $\kappa_{0}$ a small limit cycle emerges
from $\widehat{w(q)}$. More precisely, restricted to $\mathcal{F}_{q'}$,
the dynamics near $\widehat{w(q)}$ can be described as follows: There
is a strong stable manifold, codimension 2 with respect to $\mathcal{F}_{q'}$,
and a 2D center manifold passing through $\widehat{w(q)}$. All orbits
on $\mathcal{F}_{q'}$ that are within a certain distance of $\widehat{w(q)}$
are driven towards the 2D center manifold, towards the small limit
cycle bifurcating from $\widehat{w(q)}$. This dynamical picture persists
as $\kappa$ increases, at least for a little while; the limit cycle
grows larger and becomes more robust.

By the time $\kappa$ reaches $\kappa_{1}$, it is difficult to know
if the picture above still persists. If it does, then what happens
as $\kappa$ increases past $\kappa_{1}$ is that the codimension
2 strong stable manifold within $\mathcal{F}_{q'}$ becomes codimension
4, and orbits on $\mathcal{F}_{q'}$ near $\widehat{w(q)}$ are driven
towards a 4D center manifold. A second frequency of oscillation with
small amplitude develops around the existing larger and more robust
limit cycle. At each $\kappa_{m}$, the dimension of the center manifold
goes up by 2.

%%%%%%%%%%%%%%%%%%%%%%%%%%%%%

\subsection{Optimizing isolation durations\label{subsec:endemic-relev}}

In the last two subsections, we have focused on the dynamical properties
near specific equilibria in $\mathcal{E}_{I}$ for specific parameters.
That information is useful, but the question of practical importance
here is the following. Suppose we find ourselves at some $\phi=(\phi_{S},\phi_{I},\phi_{Q})\in\tilde{\mathcal{C}}$
with $0<\phi_{I}\ll1$, and the response capabilities, i.e. $p$ and
$\tau$, are such that they are not sufficient for preventing an outbreak
given the reproductive number $r$ of the disease. That is to say,
$\phi_{Q}(0)<q_{c}$. Accepting that the infection will become endemic,
the question is: will some lengths of isolation be more effective
in mitigating the outbreak, and are there optimal choices of $\kappa$?
Assuming $r,p$ and $\tau$ are fixed, we propose the following two
sets of considerations:

\medskip \noindent
\textbf{The potential endemic equilibrium}. First, there is the endemic
equilibrium to which $x(t;\phi)$ may \textendash{} or may not \textendash{}
eventually tend. This can be computed as follows: For each $\kappa$,
we compute $q':=H^{r,\kappa}(\phi)$ where $H$ is as in 
Sec.~\ref{subsec:integral-of-motion}.
This determines $\mathcal{F}_{q'}$, the leaf of the foliation $\mathcal{F}^{r,\kappa}$
containing the initial condition $\phi$. From (\ref{eq:def-h}),
we compute explicitly $I(\phi,\kappa)=v_I(q')$, the $I$-coordinate of the 
the point in $\mathcal{E}_{I}$ to which the trajectory
from $\phi$ may potentially be attracted if the duration of isolation
is $\kappa$.

Fig.~\ref{fig:sameinitial}(a) shows the trajectories for a few initial
conditions with $0<\phi_{I}\ll1$ close to the point $(\hat 1, \hat 0,\hat 0)$ in
$\tilde{\mathcal{E}}_{0}$. Here we see that for $\kappa$ up to about
10, the solution converges to a stable equilibrium, with what appears
to be a Hopf bifurcation occurring around $\kappa=10$. This is related
to, though not strictly the same as, the Hopf bifurcation in Theorem~\ref{thm:L1-lin-Hopf}:
here as we vary $\kappa$, the point in $\mathcal{E}_{I}$ changes
with it. Long before this bifurcation, the complex conjugate eigenvalues
of the points in $\mathcal{E}_{I}$ (see Theorem~\ref{thm:L1-lin-Hopf})
are clearly visible, as the solutions spiral toward the equilibria.
This translates into oscillatory behavior for $I(t)$, the fraction
of the population that is infectious. Before the bifurcation, these oscillations
are damped; the damping grows weaker and eventually disappears altogether.
As shown in Fig.~\ref{fig:sameinitial}(b), for larger $\kappa$,
the solutions tend to limit cycles which appear to grow in size, with
$I(t)$ rising periodically higher than some of the stable equilibria
to which solutions tend for smaller $\kappa$.

\begin{figure}
\noindent %
\noindent\begin{minipage}[t]{1\columnwidth}%
\includegraphics[width=1\linewidth]{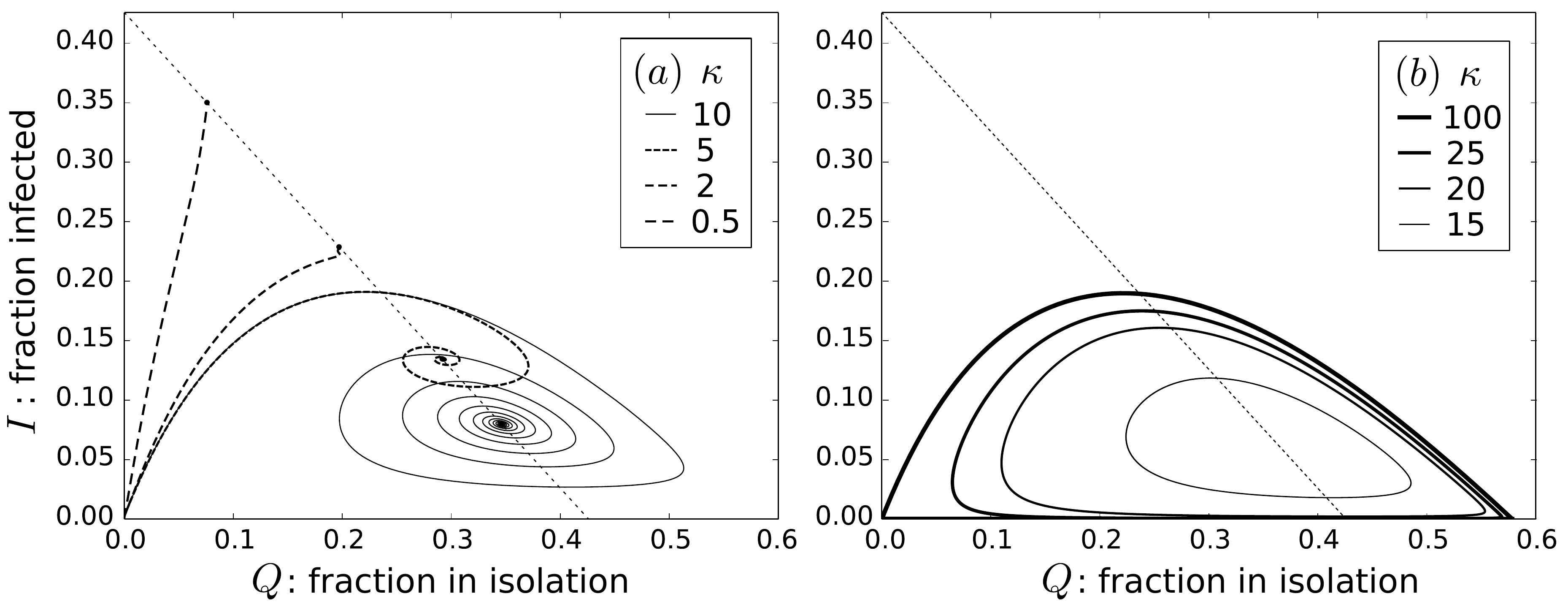}%
\end{minipage}\caption{\label{fig:sameinitial} Long term behavior for solutions with set
initial condition close to $\mathcal{\tilde{E}}_{0}$. Fix $p=\frac{1}{2}$,
$\tau=0.5$ and initial condition $\phi$, such that $\phi(\theta)=(1,0,0)$
for all $\theta\in[-\tau-\kappa,0)$ and $\phi(0)=(0.99,0.01,0)$
(Lemma \ref{lem:positivity} guarantees that $x_{t}(\phi)\in\tilde{\mathcal{C}}$
for all $t\geq\tau+\kappa$). Panel (a): For small values of $\kappa$
the solution $x_{t}(\phi)$ converges to some $\hat{v}\in\mathcal{\tilde{E}}_{I}$
given by Theorem \ref{thm:candidate}. The rate of convergence, however,
decreases as $\kappa$ grows until there is a super-critical Hopf-bifurcation
and the solution $x(t;\phi)$ converges to a limit cycle. Panel (b):
Limit cycles of $x(t;\phi)$ for $\kappa\in\left\{ 15,20,25,100\right\} .$
See also Fig.~\ref{fig:stabilitykappa}(b). }
\end{figure}

We remark that the trajectories depicted in Fig 6.3 are
likely representatives of trajectories starting near $\phi$. This
is because through each $\widehat{u(q)}$ where $u(q)=(1-q,0,q)$
with $q<q_{c}$, there is, within $\mathcal{F}_{q}$, a codimension-1
stable manifold $W^{s}$ and a 1D unstable manifold $W^{u}$. Starting
from any $\phi\in\mathcal{F}_{q}$ with $\phi_{I}\ll1$, assuming
$\phi \not \in W^{s}$, its trajectory will follow $W^{u}$, which
consists of a single trajectory. As this is true for all $\phi\in\mathcal{F}_{q}$
with $\phi_{I}\ll1$, examining where one trajectory goes will tell
us about all such trajectories.

\medskip \noindent
\textbf{The maximum outbreak size.} Above we were concerned with the
large-time dynamics of the disease, the eventual level of infection.
Here we look at the transient dynamics 
before this asymptotic state is reached.
For each $\phi$ and $\kappa$, we define 
\[
I_{{\rm peak}}(\phi,\kappa)=\sup_{t\ge0}\ I(t)
\]
where $I(t)$ is the $I$-component of $x(t;\phi)$.
This is a very relevant quantity, as too large an $I_{{\rm peak}}$-value
is clearly unacceptable even if eventually the disease winds down.

Fig.~\ref{fig:Ipeak} shows this quantity as a function of
$\kappa$ for the same $\phi$ in Fig.~\ref{fig:sameinitial} for a few
values of $\tau$.  These plots show that $I_{{\rm peak}}$ is a decreasing function
which levels off beyond a certain point, i.e., even though  $I(\phi,\kappa)$
continues to decrease 
with increasing
$\kappa$, the worst of the epidemic does not improve. That is to
say, the time course of the infection is such that it will
first get worse, and only after a certain fraction of the population is infectious
that it will start to abate, due to the effect of isolation, which
diminishes the size of the susceptible population. 

\begin{figure}
\begin{minipage}[t]{0.5\columnwidth}%
\includegraphics[width=1\linewidth]{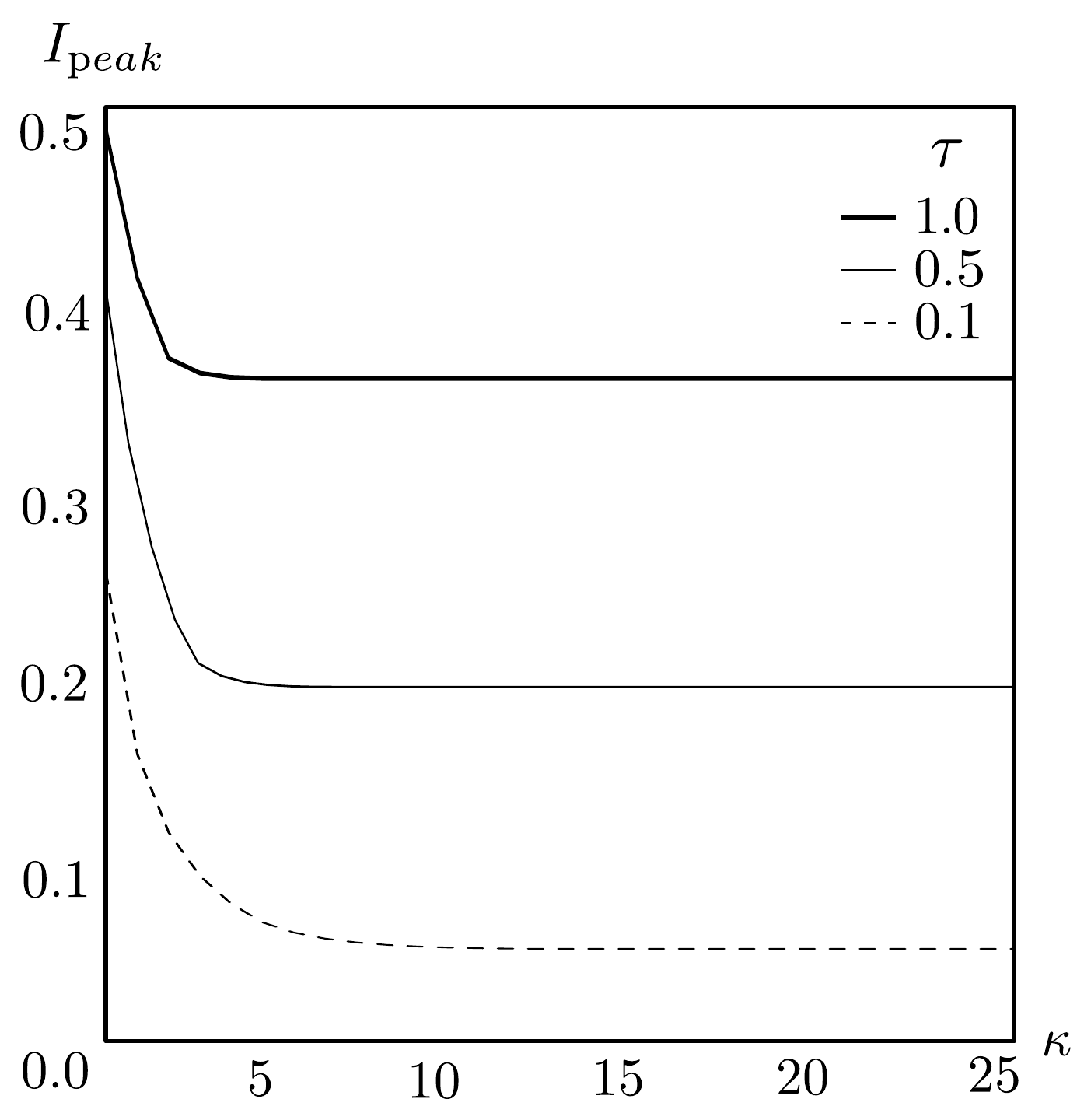}%
\end{minipage}\caption{\label{fig:Ipeak}Values of $I_{{\rm peak}}$ for $r=2.5,p=0.5$ as
functions of $\kappa\in[0,25]$ for fixed $\tau\in\left\{ 0.1,0.5,1\right\} $
and initial condition $\phi(\theta)=(1,0,0)$ for all $\theta<0$
and $\phi(0)=(0.999,0.001,0)$.}
\end{figure}

Here is a rigorous argument for why $I_{{\rm peak}}$ is bounded below
by a positive value independent of $\kappa$: Consider the limiting
case $\kappa=\infty$, i.e., individuals that enter the state $(Q)$
remain there forever. The unstable manifold at the point $(\hat 1,\hat 0,\hat 0)$
is a curve whose $I$-component increases initially and must eventually
tend to $0$ as the entire population is in $(Q)$. Denoting the maximum
value of the $I$-component of this unstable curve by $\tilde{I}_{{\rm peak}}$,
it is easy to see that for $\phi$ with $0<\phi_{I}\ll1$ near $(\hat 1,\hat 0,\hat 0)$
and any $\kappa$, we must have $I_{{\rm peak}}(\phi,\kappa)\ge\tilde{I}_{{\rm peak}}$:
the part of the population that leaves isolation becomes susceptible
and can only contribute to a larger $I(t)$. 

Finally, we discuss the question posed at the beginning of this section:
What constitutes an optimal value of $\kappa$, in the setting above
where $p$ and $\tau$ are fixed and $\phi$ is given? First one has
to decide whether it is the value of $I_{{\rm peak}}$ that matters,
or the eventual level of infection. With regard to large-time dynamics, 
there is also the following consideration: If the trajectory tends to an
equilibrium $\hat{w}\in\mathcal{E}_{I}$, then obviously the
smaller the $I$-component of $\hat{w}$, the better.
As noted in Theroem \ref{thm:candidate}, this means taking as large
 $\kappa$ as we can. But too large a value of $\kappa$ is also
impractical. Also, for larger values of $\kappa$, 
$\hat{w}$ can destabilize, with the trajectory
accumulating on a limit cycle, as shown in Fig.~\ref{fig:sameinitial}.
This means $I(t)$ will oscillate forever
periodically in time, with potentially higher peaks (as well as lower
troughs) than for the stable equilibria for smaller $\kappa$. 
Which scenario is more desirable or can be better tolerated is not a mathematical
question; it depends on factors such as the nature of the disease,
hardships at peak times, possibilities of intervention when the infection
ebbs, and so on. All we can offer is knowledge of which $\kappa$
will lead to what kinds of large-time dynamics for $I(t)$.

%%%%%%%%%%%%%%%%%%%%%%%%%%%%%%%%%
\section{Extension: latency time}

We discuss in this section a simple extension of the SIQ model, one that includes 
the idea of an latency period. This model divides the population into four groups, 
``S" for healthy and susceptible, ``E" for exposed but not yet infectious, ``I" for infectious, and ``Q" for isolation. The only change in the dynamics is as follows: Suppose an individual from Group S gets infected at time $t$. He enters Group E immediately and remains there for $\sigma$ units of time, $\sigma \ge 0$ being
a constant we will refer to as the {\it latency period}. 
For simplicity we assume that while in Group E, the individual is neither infectious 
(so he can infect no one),
nor symptomatic (so he cannot be identified and isolated), nor does recovery begin.
At time $t+\sigma$, he becomes infectious, enters Group I, and from this point on,
the rules for
identification, isolation, and recovery are the same as before. The parameters
in this extended model, which we call SEIQ, are $r, \sigma, p, \tau$ and $\kappa$. 
A schematic is shown in Fig. 8.1.

\begin{figure}[h]
\includegraphics[width=\linewidth]{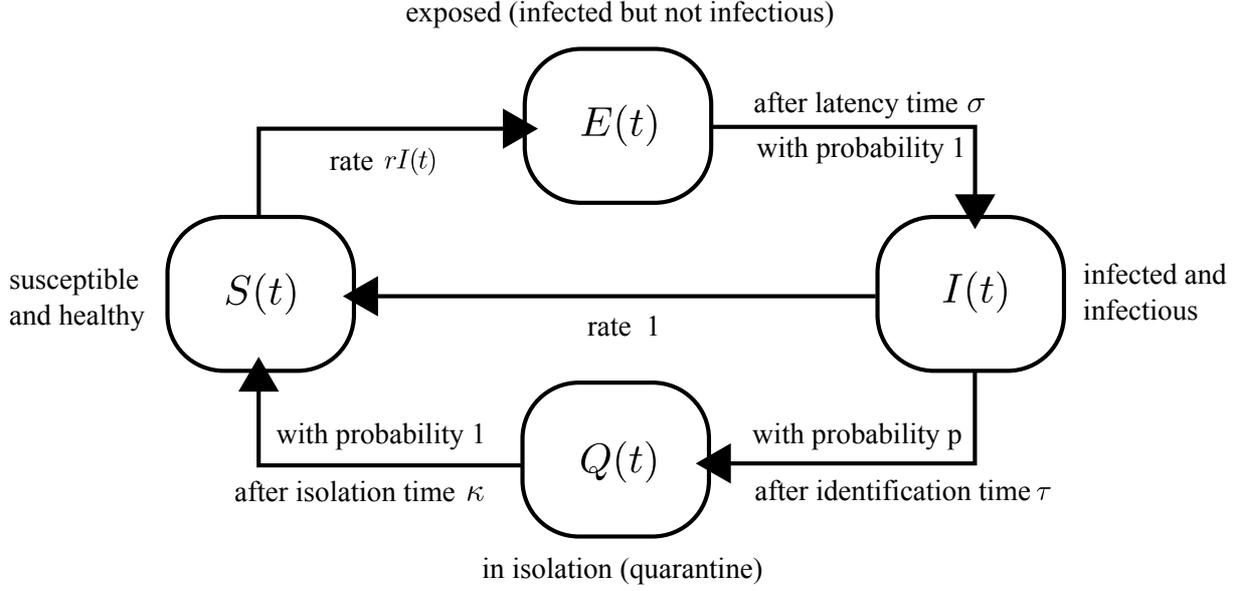}%
\caption{\label{fig:model-sigma}Illustration of the SEIQ model. 
Extension of the SIQ model with the additional feature that once infected (here referred to as $E(t)$ for exposed), individuals undergo an latency period of length $\sigma$ before becoming infectious.}
\end{figure}

We will follow a line of analysis similar to that in Secs. 3--7.
Note how the structures of SIQ persist and extend to the case with latency. 

%%%%%%%%%%%%%%%%%%%%%%%%%%%%%%%%%%%%%%%

\subsection{Mathematical set up and the set of equilibrium points}

First we write down the corresponding system of Delay Differential Equations,
derived in the same way:
\begin{eqnarray}
\dot{S}\negmedspace\left(t\right) & = & -rS\negmedspace\left(t\right)I\negmedspace\left(t\right)+I\negmedspace\left(t\right)+r\varepsilon S\negmedspace\left(t-\sigma-\tau-\kappa\right)I\negmedspace
\left(t-\sigma-\tau-\kappa\right),\label{eq:S-dyn-sigma}\\
\dot{E}\negmedspace\left(t\right) & = & rS\negmedspace\left(t\right)I\negmedspace\left(t\right)-rS\negmedspace\left(t-\sigma\right)I\negmedspace\left(t-\sigma\right),\label{eq:E-dyn-sigma}\\
\dot{I}\negmedspace\left(t\right) & = & rS\negmedspace\left(t-\sigma\right)I\negmedspace\left(t-\sigma\right)-I\negmedspace\left(t\right)-r\varepsilon S\negmedspace\left(t-\sigma-\tau\right)I\negmedspace\left(t-\sigma-\tau\right),\label{eq:I-dyn-sigma}\\
\dot{Q}\negmedspace\left(t\right) & = & r\varepsilon\left[S\negmedspace\left(t-\sigma-\tau\right)I\negmedspace
\left(t-\sigma-\tau\right)-S\negmedspace\left(t-\sigma-\tau-\kappa\right)I\negmedspace
\left(t-\sigma-\tau-\kappa\right)\right].\label{eq:Q-dyn-sigma}
\end{eqnarray}

As before, this system defines a $C^1$ semi-flow on the Banach space
\[
\mathcal{C}^\ast:=\{\phi\in C([-\sigma-\tau-\kappa,0],\mathbb{R}^{4})~| ~\phi_S(\theta)+\phi_E(\theta)+\phi_I(\theta)+\phi_Q(\theta)=1\mbox{ for all }\theta\in[-\sigma-\tau-\kappa,0]\}
,\]
equipped with the supremum norm. Whenever possible, we will use the same
notation, with an asterisk to distinguish it from the corresponding object
in the SIQ model. As before, we will study the dynamical system
on its full phase space $\mathcal{C}^\ast$, while paying special attention
to biologically relevant solutions, i.e., those with the property that for each $t\geq-\sigma-\tau-\kappa$, $x(t;\phi)=(S(t), E(t), I(t), Q(t))$ lies in the $3$-simplex 
\[
\Delta^{3}=\left\{ u=(u_{1},u_{2},u_{3},u_{4})\in\mathbb{R}^{4}:\sum_{i}u_{i}=1,u_{i}\ge0,i=1,2,3,4\right\}\ ,
\]
i.e., $x_{t}(\phi)\in\tilde{\mathcal{C}}^\ast=\{\psi\in\mathcal{C^\ast}:\psi(\theta)\in\Delta^{3}\mbox{ for all }\theta\in\left[-\sigma -\tau-\kappa,0\right]\}$ for all $t\ge0$. 

The simplest dynamical objects are equilibria. Noting that all of their coordinate functions are constant functions, one solves for them easily from 
Eqs.~\eqref{eq:S-dyn-sigma}--\eqref{eq:Q-dyn-sigma}. 
As before, we distinguish between the set of {\it disease-free equilibria}, defined by
\[
\mathcal{E}^{\ast}_{0}:=\left\{ \hat{\phi}\in\mathcal{C}^\ast~|~\,\phi_{I}=0 \right\} \qquad\mbox{and}\qquad\tilde{\mathcal{E}}^\ast_{0}:=\mathcal{E}
^\ast_{0}\bigcap\tilde{\mathcal{C}}^\ast,\]
and the set of {\it endemic equilibria}, characterized by $\phi_{I}\ne0$ and given by
\[
\mathcal{E}_{I}^\ast:=\left\{ \hat{\phi}\in\mathcal{C}^\ast|\,\phi_{S}=\frac{1}{r\left(1-\varepsilon\right)}\right\} \qquad\mbox{and}\qquad\tilde{\mathcal{E}}_{I}^\ast
:=\mathcal{E}_{I}^\ast\bigcap\tilde{\mathcal{C}^\ast}.
\]
Here $\mathcal{E}^{\ast}_{0}$ and $\mathcal{E}_{I}^\ast$ are 2D spaces 
with 
$$\mathcal{E}^{\ast}_{0}=\{\widehat{u(\eta, q)}: u(\eta, q)=(1-\eta-q, \eta, 0, q)\}$$
$$
\mbox{and} \qquad
\mathcal{E}_{I}^\ast =\{\widehat{w(\eta,q)}: w(\eta,q)=(1-q_c, \eta, q_c-\eta-q, q)\}\ .
$$

%%%%%%%%%%%%%%%%%%%%%%%%%%%%
\subsection{Small outbreaks and critical response}

As in the case with $\sigma=0$, we analyze the stability of the equilibria in 
$\mathcal{E}^{\ast}_{0}$ for $\sigma>0$.

\begin{thm}
\label{thm:E0-sigma} Let $\tau$ and $p$ be fixed, and assume $\varepsilon=pe^{-\tau}<1$.
We denote
\[
q_{c}=1-\frac{1}{r}\frac{1}{1-\varepsilon}.
\] 
If \textup{$\eta+q\ge q_{c}$}, then \textup{$\widehat{u(\eta,q)}$} is linearly
stable, and if \textup{$\eta+q < q_{c}$}, then \textup{$\widehat{u(\eta,q)}$} is linearly unstable. In more detail, at $\eta+q> q_{c}$,
the eigenvalue $\lambda=0$ of the equilibrium $\widehat{u(\eta,q)}$ has
multiplicity $2$, and there is no other eigenvalue on the imaginary
axis. At $\eta+q= q_{c}$, a third zero eigenvalue $\lambda=0$ crosses the imaginary axis with nonzero speed, so that for $\eta+q<q_{c}$, there is at least one eigenvalue
$\lambda_{1}$ with $Re(\lambda_{1})>0$. 
\end{thm}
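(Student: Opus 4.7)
The plan is to mirror the proof of Theorem~\ref{thm:E0}, adapted to the extra latency delay $\sigma$ and the additional ambient dimension from the $E$-compartment. First, I would linearize Eqs.~\eqref{eq:S-dyn-sigma}--\eqref{eq:Q-dyn-sigma} at $\widehat{u(\eta,q)} = (1-\eta-q,\eta,0,q)$. Since $I=0$ at the equilibrium and every nonlinearity is of the form $SI$, the perturbation variables $(s,e,i,h)$ with $s+e+i+h \equiv 0$ satisfy a linearized system in which $\dot s$, $\dot e$ and $\dot h$ each depend only on past values of $i$, while $i$ obeys a closed linear delay equation:
\[
\dot i(t) = r(1-\eta-q)\,i(t-\sigma) - i(t) - r\varepsilon(1-\eta-q)\,i(t-\sigma-\tau).
\]
Consequently the characteristic equation factors as $\lambda^{2}\,g(\lambda)=0$, where the $\lambda^{2}$ factor accounts for the two tangent directions along the 2D equilibrium surface $\mathcal{E}_{0}^{\ast}$ (parametrized by $\eta$ and $q$), and
\[
g(\lambda) := \lambda + 1 - r(1-\eta-q)\,e^{-\lambda\sigma}\bigl(1 - \varepsilon e^{-\lambda\tau}\bigr).
\]

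Writing $c := r(1-\eta-q)$, one has $g(0) = 1 - c(1-\varepsilon)$, which vanishes precisely when $\eta+q = q_c$. Implicit differentiation gives $\partial g/\partial(\eta+q)\big|_{\lambda=0} = r(1-\varepsilon) > 0$, so a simple real eigenvalue crosses the origin transversely as $\eta+q$ decreases through $q_c$. This yields both the multiplicity claim at $q_c$ (a third zero joins the two tangential zeros) and the appearance of a positive real eigenvalue just past $q_c$. Moreover, for any $\eta+q < q_c$ one has $g(0) < 0$ while $g(\lambda)\to +\infty$ as $\lambda\to+\infty$ along the real axis, so the intermediate value theorem directly produces a positive real root of $g$.

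The remaining and principal task is to exclude nonzero purely imaginary roots of $g$ for $\eta+q \ne q_c$. Setting $\lambda = i\omega$ and taking moduli yields
\[
1 + \omega^{2} = c^{2}\bigl(1 - 2\varepsilon\cos(\omega\tau) + \varepsilon^{2}\bigr),
\]
which is independent of $\sigma$, while matching imaginary parts yields
\[
\omega + c\sin(\omega\sigma) - c\varepsilon\sin\bigl(\omega(\sigma+\tau)\bigr) = 0.
\]
Following the strategy of Theorem~\ref{thm:E0}, I would first use the modulus equation to bound the admissible $|\omega|$, then argue that the phase equation cannot be simultaneously satisfied by any nonzero $\omega$ in that range when $\eta+q > q_c$. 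Combined with the observation that for $\eta+q$ close to $1$ (so $c\ll 1$) the only root of $g$ lies near $\lambda = -1$, a standard continuity argument then forces $\operatorname{Re}\lambda < 0$ for every nonzero root of $g$ throughout $\eta+q > q_c$, establishing linear stability.

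The main obstacle is this phase-exclusion step: the added delay $\sigma$ contributes a second delayed sinusoid to the phase equation, so the clean trigonometric comparison available in the SIQ case (where the condition reduced to $\omega/(1-r(1-q)) = \tan(\omega\tau)$) does not transfer verbatim. If a direct argument fails, a fallback is to homotope $\sigma$ from $0$ to its target value and track right half-plane roots via the argument principle applied to $g$ on large right-half-plane contours, using the $\sigma=0$ case (which coincides with Theorem~\ref{thm:E0}) as the base and transversality at any hypothetical crossing to prevent unstable eigenvalues from entering the right half-plane while $\eta+q > q_c$.
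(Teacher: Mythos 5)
Your setup coincides with the paper's: the same factored characteristic equation $\lambda^{2}g(\lambda)=0$ with $g(\lambda)=\lambda+1-r(1-\eta-q)e^{-\sigma\lambda}\left(1-\varepsilon e^{-\tau\lambda}\right)$, the two tangential zero eigenvalues along the 2D equilibrium surface, the transversal crossing of a third zero at $\eta+q=q_{c}$, and the key observation that the modulus condition obtained from $\lambda=i\omega$ is independent of $\sigma$. Your intermediate-value argument producing a positive real root of $g$ for $\eta+q<q_{c}$ is a clean global complement to the paper's purely local transversality statement, and your modulus equation is in fact the more accurate one (the paper drops the $+\varepsilon^{2}$ term in $|1-\varepsilon e^{-i\omega\tau}|^{2}$).

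The gap you flag is real, and you should know the paper does not close it the way you anticipate: it never analyzes the phase equation. It only uses the $\sigma$-independent upper bound on $\omega^{2}$, which excludes nonzero imaginary roots when $r^{2}(1-\eta-q)^{2}(1+\varepsilon)^{2}<1$, i.e.\ when $\eta+q$ exceeds a threshold strictly larger than $q_{c}$ (namely $q_{+}=1-\frac{1}{r}\frac{1}{1+\varepsilon}$ with the corrected modulus equation), and then asserts the conclusion for all $\eta+q>q_{c}$. On the band $q_{c}<\eta+q<q_{+}$ neither your sketch nor the paper's argument rules out imaginary-axis eigenvalues. Moreover, your fallback cannot repair this: because the modulus condition does not involve $\sigma$, any nonzero $\omega$ admissible for it will satisfy the phase condition for some value of $\sigma$ (the phase $\omega\sigma$ sweeps the entire circle as $\sigma$ grows), so homotoping in $\sigma$ and tracking crossings cannot prevent eigenvalues from reaching the axis on that band; it only recovers what the modulus bound already gives. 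To establish the stated stability for all $\eta+q>q_{c}$ one needs a genuine two-delay phase analysis on that band (or one must settle for the weaker threshold $q_{+}$); note the paper itself concedes that for $\sigma>0$ it cannot rule out further bifurcations below $q_{c}$.
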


Theorem~\ref{thm:E0-sigma} generalizes Theorem~\ref{thm:E0} for $\sigma>0$. 
We remark that the stability boundary $q_c$ is the same as before (hence
 the same notation). A major difference here is that
for $\sigma>0$, we could not rule out further destabilizing bifurcations 
in the region $\eta+q<q_c$.

\begin{proof}
Linearizing Eqs.~\eqref{eq:S-dyn-sigma}--\eqref{eq:Q-dyn-sigma} around $\phi=\widehat{u(\eta,q)}$ reveals the characteristic equation 
$$ \lambda^2\left(\lambda+1-r(1-q-\eta)e^{-\sigma\lambda}(1-\varepsilon e^{-\tau\lambda})\right)=0.$$
Note that this equation is again independent of $\kappa$. There are two trivial eigenvalues corresponding to the directions $\eta,q$ along the $2$-parameter family $\tilde{\mathcal{E}}^{\ast}_{0}$. For the remaining part of the spectrum, we impose the ansatz $\lambda=i\omega$, $\omega>0$ to reveal potential bifurcation points. Note that we have already independently investigated the case $\omega=0$ in the proof of Theorem~\ref{thm:E0}. It holds that
\begin{equation}
\omega^2 = r^2(1-\eta-q)^2(1-2\varepsilon\cos(\omega\tau))-1 \leq  r^2(1-\eta-q)^2(1+2\varepsilon)-1,
\end{equation}
independently of $\sigma$. If the upper bound for $\omega^2$ is negative, then obviously no further bifurcation can occur. Now, $r^2(1-\eta-q)^2(1+2\varepsilon)-1<0$ implies
$$
\eta+q > 1-\frac{1}{r}\frac{1}{1+2\varepsilon} > q_c
$$
such that for $\eta+q>q_c$ there are no bifurcation points. The rest of the assertion follows directly from Theorem~\ref{thm:E0}.
\end{proof} 

As in Sec. 5.2, nonlinear theory applies to initial conditions in 
small neighborhoods of  $\mathcal{E}^{\ast}_{0}$: a small outbreak near 
$\phi=\widehat{u(\eta,q)}$ with $\eta+q > q_c$ is squashed, and one starting near 
$\eta+q < q_c$ will grow.

\medskip \noindent
{\bf Biological implications:}
Since it slows down the spreading of the disease, one might expect an infection with longer latency time to tolerate weaker
responses, e.g. a larger $\tau$. The analysis above shows otherwise: Consider an initial condition $\phi$ 
near $(\hat 1, \hat 0, \hat 0, \hat 0)$ with $\phi_I>0$. Since $q_c$ for $\sigma>0$
is identical to that for $\sigma=0$, it follows that the minimum isolation probability 
$p_c$ and critical delays $\tau_c(p)$ for each $p>p_c$ are all entirely independent 
of the latency period $\sigma$.
This can be understood as follows: In the case of no isolation, whether
or not a disease spreads has to do with the number of secondary cases,
referring to the number of individuals infected by a single infected
individual. This clearly has nothing to do with latency time.
With isolation, the same holds true, and the response is what
is done to decrease the number of secondary cases after an individual
becomes infectious, and that again has nothing to do with latency
time.

%%%%%%%%%%%%%%%%%%%%%%%%%%%%%
\subsection{Potential endemic equilibria for $\boldsymbol\sigma\boldsymbol>\boldsymbol0$}\label{subsec:endemic-sigma}

As with the case $\sigma=0$, Eqs.~\eqref{eq:S-dyn-sigma}--\eqref{eq:Q-dyn-sigma} possess conserved quantities in addition to the conservation of mass. 
Let $r,\kappa$ and $\sigma$
be fixed. We define $H_1^\ast=H_1^{r,\kappa,\sigma}:\mathcal{C}^\ast\to\mathbb{R}$ by 
\begin{equation}
H^\ast_1\left(\phi\right):=1-\phi_{S}(0)-\phi_{E}(0)-\phi_{I}(-\kappa)+\negmedspace \int\limits _{-\kappa}^{0}\phi_{I}(s)\mbox{d}s-r\negmedspace \int\limits _{-\sigma-\kappa}^{-\sigma}\phi_{S}(s)\phi_{I}(s)\mbox{d}s, \label{eq:H1-sigma}
\end{equation}
where $\phi=(\phi_{S},\phi_{S},\phi_{I},\phi_{Q})$. It is easy to see that $H_1^\ast$ is a natural generalization of $H$ for $\sigma>0$, as $H_1^{r,\kappa,0}=H^{r,\kappa}$, i.e. their values coincide, if $\sigma=0$ and $\phi_E(0)=0$. Define also
$H_2^\ast=H_2^{r,\kappa,\sigma}:\mathcal{C}^\ast\to\mathbb{R}$ by
\begin{equation}
H^\ast_2\left(\phi\right):=\phi_{E}(0)-r\negmedspace \int\limits _{-\sigma}^{0}\phi_{S}(s)\phi_{I}(s)\mbox{d}s, \label{eq:H2-sigma}
\end{equation} 
and let $H^\ast(\phi):=(H_1^\ast(\phi),H_2^\ast(\phi))$.

\begin{prop}
\label{prop:foliation}
For each fixed $r,\sigma,\kappa$, we have
\[
\frac{d}{dt} H^\ast (x_{t}(\phi))=0\qquad\mbox{for all } \phi\in\mathcal{C^\ast}\mbox{ and }t\ge0,
\]
and the level sets of $H^\ast$ define a smooth codimension 2 foliation
on $\mathcal{C^\ast}$. 
\end{prop}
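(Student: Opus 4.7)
The plan is to follow the template of Proposition~\ref{prop:foliation-sigma}, splitting the claim into (i) conservation of each of the two component functionals $H_1^*, H_2^*$ and (ii) a constant-rank property of $D_\phi H^*$. Since $H^*=(H_1^*,H_2^*)$, conservation amounts to showing separately that $\frac{d}{dt}H_1^*(x_t(\phi))=0$ and $\frac{d}{dt}H_2^*(x_t(\phi))=0$ by differentiating along a trajectory and substituting Eqs.~\eqref{eq:S-dyn-sigma}--\eqref{eq:Q-dyn-sigma}.

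For $H_2^*$, I would rewrite $H_2^*(x_t(\phi))=E(t)-r\int_{t-\sigma}^t S(u)I(u)\,du$ and differentiate to obtain $\dot E(t)-r[S(t)I(t)-S(t-\sigma)I(t-\sigma)]$, which vanishes by \eqref{eq:E-dyn-sigma}. For $H_1^*$, write
$$H_1^*(x_t(\phi)) = 1 - S(t) - E(t) - I(t-\kappa) + \int_{t-\kappa}^t I(u)\,du - r\int_{t-\sigma-\kappa}^{t-\sigma} S(u)I(u)\,du,$$
differentiate, and substitute \eqref{eq:S-dyn-sigma}, \eqref{eq:E-dyn-sigma}, \eqref{eq:I-dyn-sigma}. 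The result is a sum of six distinct delayed $SI$-terms together with undelayed and $\kappa$-delayed $I$-terms; these match up in cancelling pairs as follows: $\pm rS(t)I(t)$ between $\dot S$ and $\dot E$; $\pm r\varepsilon S(t-\sigma-\tau-\kappa)I(t-\sigma-\tau-\kappa)$ between $\dot S$ and $\dot I(t-\kappa)$; $\pm rS(t-\sigma)I(t-\sigma)$ between $\dot E$ and the explicit boundary term of the second integral; $\pm rS(t-\sigma-\kappa)I(t-\sigma-\kappa)$ between $\dot I(t-\kappa)$ and the other boundary term; and finally the $\pm I(t)$, $\pm I(t-\kappa)$ terms cancel pairwise.

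For the foliation statement, by the Implicit Function Theorem it suffices to show that $D_\phi H^*$ is surjective onto $\mathbb{R}^2$ at every $\phi\in\mathcal{C}^*$; then level sets are $C^1$ codimension-$2$ submanifolds varying smoothly in $\phi$. At a given $\phi$ I would exhibit two test directions $\psi^{(1)},\psi^{(2)}\in\mathcal{C}^*$ making the $2\times 2$ matrix $M_{ij}:=(D_\phi H_i^*)\psi^{(j)}$ invertible. Take $\psi^{(1)}=(0,\hat 1,0,0)$, the constant unit function in the $E$-component; a direct computation yields $(D_\phi H_1^*)\psi^{(1)}=-1$ and $(D_\phi H_2^*)\psi^{(1)}=1$. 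For $\psi^{(2)}$, take a bump $(\psi_S,0,0,0)$ with $\psi_S(0)=1$ and $\mathrm{supp}\,\psi_S\subset[-\delta,0]$ for some $\delta<\sigma$. Since this support misses $[-\sigma-\kappa,-\sigma]$, we get $(D_\phi H_1^*)\psi^{(2)}=-1$, while $(D_\phi H_2^*)\psi^{(2)}=-r\int_{-\delta}^0 \psi_S(s)\phi_I(s)\,ds=O(\delta)$. Hence $\det M=r\int_{-\delta}^0\psi_S(s)\phi_I(s)\,ds+1\neq 0$ for $\delta$ sufficiently small, uniformly since $\|\phi_I\|_\infty\le 1$ on $\tilde{\mathcal{C}}^*$ (and in any case bounded on the compact neighborhood of interest).

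The only real obstacle is the bookkeeping in the cancellation for $H_1^*$: the latency period $\sigma$ shifts several of the delayed-infection terms, so one must align the arguments $t,t-\sigma,t-\kappa,t-\sigma-\kappa,t-\sigma-\tau-\kappa$ carefully to see that the six $rSI$-type terms pair up correctly. Once this is tabulated the identity falls out term by term, and the rank argument is essentially identical to (a two-dimensional version of) the one in Proposition~\ref{prop:foliation-sigma}.
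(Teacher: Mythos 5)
Your proposal is correct and takes essentially the same route as the paper: the paper verifies conservation of $H_1^\ast$ and $H_2^\ast$ by the same direct differentiation and substitution, and establishes surjectivity of $D_\phi H^\ast$ using exactly your two test directions (the constant function in the $E$-component and an $S$-bump supported in $[-\delta,0]$), merely leaving the term-by-term cancellation to the reader. Your tabulation of the six cancelling $rSI$-pairs and the explicit $2\times 2$ determinant are just the details the paper omits.
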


The proof is analogous to the proof of Lemma~\ref{prop:foliation}. We leave it to the reader to check that the range of $(D_\phi H^\ast)$ is $2$-dimensional for any $\phi$. A suitable basis of the image is given by $\{\psi^1,\psi^2\}$, where $\psi^1_E=\hat{0}$ and $\psi^1_S,\psi^1_I,\psi_Q$ are defined as in the proof of Lemma~\ref{prop:foliation}, and $\psi^2=(\hat{0},\hat{1},\hat{0},\hat{0})$.

Let $r, \kappa, \sigma$ be fixed. We let 
$\mathcal{F}^\ast = \mathcal{F}^{r,\kappa,\sigma}$ denote the foliation defined by 
$H^{r,\kappa,\sigma}$, and let 
$\mathcal{F}_{\eta,q}^\ast$
denote the leaf of $\mathcal{F}^\ast$ containing the point 
$\widehat{u(\eta,q)}\in\mathcal{\mathcal{E}}^{\ast}_{0}$. 
The following is the analog of Theorem 7.

\begin{thm}
\label{thm:candidate-sigma} For fixed $r,\kappa,\sigma,\eta,q,$ we let $\mathcal{F}^\ast=\mathcal{F}^{r,\kappa,\sigma},$ and consider $\mathcal{F}^\ast_{\eta,q}$. 
\begin{enumerate}
\item[(1)] $\mathcal{F}_{\eta,q}^\ast\cap\mathcal{E}^\ast_{0}=\{ \widehat{u(\eta,q)} \}$, where $u(\eta,q)=(1-\eta-q,\eta,0,q)$. 
\item[(2)] Fixing additionally $p,\tau$, which determines $\mathcal{E}^\ast_{I}=\{ \phi_S=[r(1-\varepsilon)]^{-1} \}$, we have $ \mathcal{F}_{\eta,q}^\ast \cap \mathcal{E}^\ast_{I}= \{ \widehat{v(\eta,q)} \}$, where $v(\eta,q)=(v_S,v_E(\eta,q),v_I(\eta,q),v_Q(\eta,q))$ and
\begin{eqnarray}
v_{E}(\eta,q) &=&
\frac{\sigma}{1-\varepsilon+\sigma+\varepsilon\kappa}\left(q_{c}-q-\eta\right)+\eta, \nonumber \\
v_{I}(\eta,q) &= & \frac{1-\varepsilon}{1-\varepsilon+\sigma+\varepsilon\kappa}\left(q_{c}-q-\eta\right), \nonumber \\
v_{Q}(\eta,q) &=&  \frac{\varepsilon\kappa}{1-\varepsilon+\sigma+\varepsilon\kappa}\left(q_{c}-q-\eta\right)+q.\nonumber
\end{eqnarray}
\end{enumerate}
\end{thm}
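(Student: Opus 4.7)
The plan is to verify both assertions by direct substitution into the two conserved quantities $H_1^\ast$ and $H_2^\ast$, mirroring what is done (without detail) for Theorem~\ref{thm:candidate}. Because every element of $\mathcal{E}^\ast_0$ and $\mathcal{E}^\ast_I$ is a constant function, each integrand in \eqref{eq:H1-sigma}--\eqref{eq:H2-sigma} reduces to a constant, so the evaluation is purely algebraic. Proposition~\ref{prop:foliation} already guarantees that $\mathcal{F}^\ast_{\eta,q}$ is a genuine codimension-$2$ leaf, so to pin down the intersection with $\mathcal{E}^\ast_0$ (respectively $\mathcal{E}^\ast_I$) it suffices to show that $H^\ast$ restricted to each of these $2$-dimensional families is a diffeomorphism onto its image, and then identify the point sent to $(q,\eta)$.

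For part (1), I would plug $\phi = \widehat{u(\eta',q')} = (\widehat{1-\eta'-q'},\hat\eta',\hat 0,\hat q')$ into $H^\ast$. Since $\phi_I\equiv 0$ both integrals vanish, leaving $H_1^\ast(\widehat{u(\eta',q')}) = q'$ and $H_2^\ast(\widehat{u(\eta',q')}) = \eta'$. Thus $(\eta',q')\mapsto H^\ast(\widehat{u(\eta',q')})$ is (up to the swap of coordinates) the identity, and the level set $\{H^\ast=(q,\eta)\}$ meets $\mathcal{E}^\ast_0$ in the single point $\widehat{u(\eta,q)}$.

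For part (2), I would write a generic element of $\mathcal{E}^\ast_I$ as $\widehat{w}$ with $w = (1-q_c,w_E,w_I,w_Q)$ and $w_E+w_I+w_Q=q_c$ (using that $w_S = 1/[r(1-\varepsilon)] = 1-q_c$). Substituting into \eqref{eq:H1-sigma}--\eqref{eq:H2-sigma} and evaluating the constant integrals gives
\[
H_1^\ast(\widehat{w}) = q_c - w_E - w_I + \kappa w_I - r(1-q_c)\kappa w_I,
\]
\[
H_2^\ast(\widehat{w}) = w_E - r(1-q_c)\,\sigma\, w_I .
\]
Invoking the identity $r(1-q_c) = (1-\varepsilon)^{-1}$ coming from the definition of $q_c$, one rewrites these as
\[
H_1^\ast(\widehat{w}) = q_c - w_E - \tfrac{1-\varepsilon+\varepsilon\kappa}{1-\varepsilon}\,w_I, \qquad H_2^\ast(\widehat{w}) = w_E - \tfrac{\sigma}{1-\varepsilon}\,w_I .
\]
Imposing $H_1^\ast(\widehat{w})=q$ and $H_2^\ast(\widehat{w})=\eta$ yields a $2\times 2$ linear system in $(w_E,w_I)$ whose determinant is $-(1-\varepsilon+\sigma+\varepsilon\kappa)/(1-\varepsilon)\ne 0$. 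Solving gives exactly the stated $v_E(\eta,q)$ and $v_I(\eta,q)$; mass conservation then forces $w_Q = q_c - w_E - w_I$, and after collecting terms this reduces to the stated $v_Q(\eta,q)$.

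There is no real conceptual obstacle: the only place where care is needed is the bookkeeping in the last step, where one must use $r(1-q_c)=(1-\varepsilon)^{-1}$ to cancel the $\kappa$-terms in $H_1^\ast$ and to recover the common denominator $1-\varepsilon+\sigma+\varepsilon\kappa$ that appears throughout the formulas for $v_E$, $v_I$, $v_Q$. Nondegeneracy of the linear system is automatic (the determinant above is strictly negative), so both the existence and the uniqueness of the intersection point are delivered in one stroke.
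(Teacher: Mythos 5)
Your proposal is correct and follows essentially the same route as the paper: evaluate $H^\ast$ on constant functions, observe $H^\ast(\widehat{u(\eta',q')})=(q',\eta')$ for part (1), and for part (2) solve the resulting nondegenerate $2\times2$ linear system using $r(1-q_c)=(1-\varepsilon)^{-1}$; your algebra checks out and reproduces the stated $v_E,v_I,v_Q$. (Only a cosmetic quibble: with the row ordering you display, the determinant is $+(1-\varepsilon+\sigma+\varepsilon\kappa)/(1-\varepsilon)$ rather than its negative, but either way it is nonzero.)
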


The proof of Theorem 12 follows from straightforward computation. Part (2) relies on solving 
the system of equations
\begin{eqnarray}
q &=& H_1^\ast(\widehat{w(\eta',q')}), \nonumber \\
\eta &=& H_2^\ast(\widehat{w(\eta',q')}), \nonumber\ 
\end{eqnarray}
where $w$ is defined as at the end of Sec. 8.1. 
From the form of $H_1^\ast$ and $H_2^\ast$, one sees that the quantities
on the right are linear combinations of $\eta'$ and $q'$. 

\medskip \noindent
{\bf Biological implications:} When a small outbreak occurs and the response
(in terms of $p$ and $\tau$) is inadequate, the infection will spread. In our model,
this corresponds to starting from an initial condition near an unstable disease-free
equilibrium point $\widehat{u(\eta,q)}$ for some $0 \le \eta, q \le 1$ with 
$\eta+q < q_c$. Such an infection may eventually approach an endemic 
equilibrium, or it may fluctuate indefinitely. 
Theorem 12 tells us that there is a unique endemic equilibrium to which
it can potentially converge, and predicts the fraction of infected individuals
in this endemic equilibrium. 

Note that the latency period $\sigma$ does appear in the formula for $v_I$; 
the longer the latency, the smaller
the fraction of infectious individuals.
Note also that $\sigma$ and $\varepsilon \kappa$ play similar roles in the formulas 
in Theorem 12: both involve taking subpopulations
out of circulation, so they neither infect nor can be further infected. There is a pre-factor $\varepsilon$ in front of $\kappa$ as $1-\varepsilon$ represents the degree to which the isolation procedure is compromised. 

%%%%%%%%%%%%%%%%%%%%%%%%%%%%
%%%%%%%%%%%%%%%%%%%%%%%%%%%%%%%%%

\section{Outlook}

We have investigated how a simple isolation scheme can affect the long-term
dynamics of an infection.
We have identified a critical isolation probability $p_c$ and a critical 
identification time $\tau_{c}$, and have proved that the infection cannot persist if
one has the capability to isolate sufficiently many hosts within a
sufficiently short time after a host's infection. Moreover, we have
carefully investigated how the length of isolation affects the outcome
of an epidemic if these thresholds are not met, and have found,
a little counterintuitively, that longer isolation can lead to oscillations 
in the fraction of infected hosts that periodically rises above that for 
shorter lengths of isolation. 

Our underlying model is, of course, highly idealized, and needs to be
modified substantially before it can be applied to real world scenarios.
To demonstrate that it offers a clear and promising starting point for the systematic
analysis of isolation processes, we investigated a first extension of the model,
to the case where infected hosts do not become infectious directly after exposure with the disease but undergo an latency period. We showed in this extended model
that our results
for the original model are not changed substantially, and the structures
are robust.

One of the most serious simplifications in the work presented is that 
we have neglected the underlying spatial topology of the infection process. 
We recognize its impact on disease evolution,
and the need to  incorporate network heterogeneity in future work. Well known techniques include higher order moment closure techniques as suggested in \cite{Kiss2015}, and heterogeneous mean field approximations \cite{Barthelemy2005}. For many problems temporal networks with adaptive wiring can be useful \cite{Belik}. Other steps towards realism 
include the incorporation of basic disease characteristics and
data-driven modeling, which has become increasingly feasible thanks 
to modern mobile technologies capable of reporting relevant data 
in real time  \cite{Salath??2012,Stopczynski2014}.

\subsection*{Acknowledgment}
The authors would like to thank Odo Diekmann
and Dimitry Turaev for critical discussion, and Stefan Ruschel would
like to thank the University of São Paulo in São Carlos and New York
University for their hospitality. 
This paper was developed within the scope of the IRTG 1740/ TRP 2015/50122-0,
funded by the DFG/FAPESP. Lai-Sang Young was partially supported by
NSF Grant DMS-1363161. Tiago Pereira was partially supported by  FAPESP grant 2013/07375-0. 
\section*{Appendix \label{sec:Appendix}}

\subsection*{Description of SIQ network model. }

\noindent We consider an undirected, unweighted and stationary (contact)
network with $N$ nodes and average degree $\left\langle k\right\rangle $,
and an infection spreading process on this network as treated in \cite{Pereira2015}.
Specifically, each node can be susceptible $\left(S\right)$, infected
$\left(I\right)$ or isolated $\left(Q\right)$. A susceptible
node is infected by each one of its infected neighbors at rate $\beta$.
Infected nodes recover, i.e., revert to the susceptible state, at
rate $\gamma$. Additionally, nodes that remain infected for time
$\tau$ are isolated with probability $p$. Isolated nodes cannot
be infected by any of their neighbors and do not infect susceptible
neighbors. We augment \cite{Pereira2015} to allow for finite times
of isolation, assuming that each node in isolation is discharged after
time $\kappa$. Upon discharge a node is immediately susceptible again
and retains the same neighborhood as prior to isolation.

\subsection*{Modeling the SIQ network by systems of delay differential equations}

To approximate the epidemic spreading process described above, we
follow \cite{Keeling1999,Kiss2015} in spirit and in notation. Let
$\left[S\right]\negmedspace\left(t\right),\left[I\right]\negmedspace\left(t\right),\left[Q\right]\negmedspace\left(t\right)$
denote the number of susceptible, infected, and isolated nodes
respectively at time $t$, and $\left[SI\right]\negmedspace\left(t\right)$
the number of links between susceptible and infected nodes. For $A\in\left\{ S,I,Q\right\} $,
we use $\left[\textrm{\ensuremath{\rightarrow}}A\right]\negmedspace\left(t,0\right)$
to denote the rate at which nodes enter state ($A$) at time $t$,
and for $s>0$, we define 
\[
\left[\textrm{\ensuremath{\rightarrow}}A\right]\negmedspace\left(t,s\right):=\lim_{\Delta t\to0}\frac{1}{\Delta t}\left(\begin{array}{c}
\mbox{\# nodes entering (\ensuremath{A}) on \ensuremath{\left[t,t+\Delta t\right]}}\\
\mbox{ and remaining till time \ensuremath{t+s}}
\end{array}\right)\ .
\]
Note that $t+s\in\mathbb{R}$ is the time point at which we observe
$\left[\textrm{\ensuremath{\rightarrow}}A\right]\negmedspace\left(t,s\right)$.
In the mathematical biology literature, one often divides the population
into cohorts. In our model, for $s>\tau$ the quantity $\left[\textrm{\ensuremath{\rightarrow}}I\right]\negmedspace\left(t-s,s\right)$
represents the size of the cohort of nodes newly infected at time
$t-s$, did not enter isolation at time $t-s+\tau$, and have not
recovered by time $t$.

We infer the total number of nodes in state $\left(A\right)$
at a given time with the help of these quantities. From the network
process, we have that $\left[\textrm{\ensuremath{\rightarrow}}I\right]\negmedspace(t,0)=\beta\left[SI\right]\negmedspace\left(t\right)$.
Similarly, we have that $\left[\textrm{\ensuremath{\rightarrow}}Q\right]\negmedspace(t,0)=\beta\left[SI\right]\negmedspace\left(t-\tau\right)\cdot e^{-\gamma\tau}\cdot p$,
i.e., with probability $p$, nodes infected at time $t-\tau$ that
have not recovered by time $t$ will enter isolation at this time.
Incorporating the rules for isolation, we obtain the simple relations
\[
\left[\textrm{\ensuremath{\rightarrow}}I\right]\negmedspace(t-s,s)=\beta e^{-\gamma s}\left[SI\right]\negmedspace\left(t-s\right)\left(1-pH\left(s-\tau\right)\right),
\]
\[
\left[\textrm{\ensuremath{\rightarrow}}Q\right]\negmedspace(t-s,s)=\beta pe^{-\gamma\tau}\left[SI\right]\negmedspace\left(t-s-\tau\right)\left(1-H\left(s-\kappa\right)\right)
\]
where $H(\cdot)$ is the heaviside function with $H(s)=1$ for $s\ge0$,
and $H(s)=0$ for $s<0$.

Given an initial condition $(\left[S\right]\negmedspace\left(t\right),\left[I\right]\negmedspace\left(t\right),\left[Q\right]\negmedspace\left(t\right))$
where $\left[S\right]\negmedspace\left(t\right),\left[I\right]\negmedspace\left(t\right)$
and $\left[Q\right]\negmedspace\left(t\right)$ are continuous functions
defined on the interval $[-\tau-\kappa,0]$, it is not hard to deduce
from the relations above that for $t\ge0$, 
\begin{eqnarray*}
\left[S\right]^{\prime}\negmedspace\left(t\right) & = & -\beta\left[SI\right]\negmedspace\left(t\right)+\gamma\left[I\right]\negmedspace\left(t\right)+\left[\textrm{\ensuremath{\rightarrow}}Q\right]\negmedspace\left(t-\kappa,\kappa\right),\\
\left[I\right]^{\prime}\negmedspace\left(t\right) & = & \beta\left[SI\right]\negmedspace\left(t\right)-\gamma\left[I\right]\negmedspace\left(t\right)-p\left[\textrm{\ensuremath{\rightarrow}}I\right]\negmedspace\left(t-\tau,\tau\right),\\
\left[Q\right]^{\prime}\negmedspace\left(t\right) & = & p\left[\textrm{\ensuremath{\rightarrow}}I\right]\negmedspace\left(t-\tau,t\right)-\left[\textrm{\ensuremath{\rightarrow}}Q\right]\negmedspace\left(t-\kappa,\kappa\right)\ .
\end{eqnarray*}
The equations for $\left[I\right]^{\prime}\negmedspace\left(t\right)$
and $\left[Q\right]^{\prime}\negmedspace\left(t\right)$ are obtained
by direct computation of derivatives, and the one for $\left[S\right]^{\prime}\negmedspace\left(t\right)$
is obtained by setting $\left[S\right]\negmedspace\left(t\right)+\left[I\right]\negmedspace\left(t\right)+\left[Q\right]\negmedspace\left(t\right)=N$.

Closing this model as proposed in \cite{Keeling1999} by the approximation
$\left[SI\right]\negmedspace\left(t\right)\approx\left\langle k\right\rangle \frac{\left[S\right]\negmedspace\left(t\right)}{N}\left[I\right]\negmedspace\left(t\right)$,
that is, by neglecting any correlation between $\left(S\right)$ and
$\left(I\right)$ nodes, and assuming for now (we will return to this
point later) that the relations $\left[\textrm{\ensuremath{\rightarrow}}I\right]\negmedspace(t-s,s)$
and $\left[\textrm{\ensuremath{\rightarrow}}Q\right]\negmedspace(t-s,s)$
above hold for all $t-s\ge-\tau-\kappa$, we obtain 
\begin{eqnarray*}
\left[S\right]^{\prime}\negmedspace\left(t\right) & = & -\beta\frac{\left\langle k\right\rangle }{N}\left[S\right]\negmedspace\left(t\right)\left[I\right]\negmedspace\left(t\right)+\gamma\left[I\right]\negmedspace\left(t\right)+\beta pe^{-\gamma\tau}\frac{\left\langle k\right\rangle }{N}\left[S\right]\negmedspace\left(t-\tau-\kappa\right)\left[I\right]\negmedspace\left(t-\tau-\kappa\right),\\
\left[I\right]^{\prime}\negmedspace\left(t\right) & = & \beta\frac{\left\langle k\right\rangle }{N}\left[S\right]\negmedspace\left(t\right)\left[I\right]\negmedspace\left(t\right)-\gamma\left[I\right]\negmedspace\left(t\right)-\beta pe^{-\gamma\tau}\frac{\left\langle k\right\rangle }{N}\left[S\right]\negmedspace\left(t-\tau\right)\left[I\right]\negmedspace\left(t-\tau\right),\\
\left[Q\right]^{\prime}\negmedspace\left(t\right) & = & \beta pe^{-\gamma\tau}\frac{\left\langle k\right\rangle }{N}\left(\left[S\right]\negmedspace\left(t-\tau\right)\left[I\right]\negmedspace\left(t-\tau\right)-\left[S\right]\negmedspace\left(t-\tau-\kappa\right)\left[I\right]\negmedspace\left(t-\tau-\kappa\right)\right).
\end{eqnarray*}

Finally, we rescale the state variables $S(t)=[S](t)/N,\ I(t)=[I](t)/N,\ Q(t)=[Q](t)/N$,
rescale time $\tilde{t}=t/\gamma$, write $\frac{\textrm{d}}{\textrm{d}\tilde{t}}=\dot{}$\ ,
and introduce the rescaled parameters $r=\frac{\beta\left\langle k\right\rangle }{\gamma},\:\varepsilon=pe^{-\gamma\tau},\:\tilde{\kappa}=\gamma\kappa,\:\tilde{\tau}=\gamma\tau$
to obtain 
\begin{eqnarray*}
\dot{S}\negmedspace\left(\tilde{t}\right) & = & -rS\negmedspace\left(\tilde{t}\right)I\negmedspace\left(\tilde{t}\right)+I\negmedspace\left(\tilde{t}\right)+r\varepsilon S\negmedspace\left(\tilde{t}-\tilde{\tau}-\tilde{\kappa}\right)I\negmedspace\left(\tilde{t}-\tilde{\tau}-\tilde{\kappa}\right),\\
\dot{I}\negmedspace\left(\tilde{t}\right) & = & rS\negmedspace\left(\tilde{t}\right)I\negmedspace\left(\tilde{t}\right)-I\negmedspace\left(\tilde{t}\right)-r\varepsilon S\negmedspace\left(\tilde{t}-\tilde{\tau}\right)I\negmedspace\left(\tilde{t}-\tilde{\tau}\right),\\
\dot{Q}\negmedspace\left(\tilde{t}\right) & = & r\varepsilon\left(S\negmedspace\left(\tilde{t}-\tilde{\tau}\right)I\negmedspace\left(\tilde{t}-\tilde{\tau}\right)-S\negmedspace\left(\tilde{t}-\tilde{\tau}-\tilde{\kappa}\right)I\negmedspace\left(\tilde{t}-\tilde{\tau}-\tilde{\kappa}\right)\right).
\end{eqnarray*}
For notational simplicity, we omit the tildes from here on, but it
is important to keep in mind that our findings are stated in the characteristic
timescale of the recovery process. These are Eqs.~\eqref{eq:S-dyn}\textendash \eqref{eq:Q-dyn}
in the main text.

\subsection*{Positivity of solutions}

Since Eqs.~\eqref{eq:S-dyn}\textendash \eqref{eq:Q-dyn}
are intended to describe transfer of mass among the states $(S),(I)$
and $(Q)$, one might expect that they satisfy not only mass conservation,
i.e. $S(t)+I(t)+Q(t)\equiv1$, but also positivity, i.e., $S(t),I(t),Q(t)\ge0$,
for all $t\ge0$, provided these conditions are satisfied by the initial
condition. The positivity part, however, is not true without further
assumptions as we now explain. Let $\left[I\right]\!\left(t\right),\left[Q\right]\!\left(t\right),$
$t\in\left[-\tau-\kappa,0\right],$ be given. Then for $t\geq0,$
we may split $\left[I\right]\!\left(t\right)$ into 
\[
\left[I\right]\!\left(t\right):=\left[I\right]_{1}\!\left(t\right)+\left[I\right]_{2}\!\left(t\right),
\]
where $\left[I\right]_{1}\!\left(t\right)$ and $\left[I\right]_{2}\!\left(t\right)$
represent the contribution to the number of nodes entering before
and after time $0$ respectively. We then have 
\begin{equation}
\left[I\right]_{1}\!\left(t\right)=e^{-\gamma\left(t\right)}\left(\left[I\right]\!\left(0\right)-\int\limits _{\max\left\{ -t+\tau,0\right\} }^{\tau}p\left[\rightarrow I\right]\negmedspace\left(-s,s\right)\textrm{d}s\right),\label{eq:reconstr}
\end{equation}
and 
\[
\left[I\right]_{2}\!\left(t\right)=\int\limits _{0}^{t}\left[\rightarrow I\right]\negmedspace\left(t-s,s\right)\textrm{d}s.
\]
The limits of integration in the integral in $\left[I\right]_{1}\!\left(t\right)$
are deduced from the fact that for $0\leq t\leq\tau$, nodes that
leave ($I$) for ($Q$) on $\left[t_{0},t\right]$ entered on the
time interval $\left[-\tau,t-\tau\right]$, whereas for $t\geq\tau$,
these nodes entered on the time interval $\left[-\tau,0\right]$.

In the derivation of the delay equations above, we have assumed that
$[\rightarrow I]\negmedspace\left(-s,0\right)$ is proportional to
$\left[S\right]\!\left(-s\right)\left[I\right]\!\left(-s\right)$
and recovery occurs at rate $\gamma$, but this need not be true in
the given initial condition: it can happen that the number of nodes
in state $(I)$ for $t<0$ is smaller than assumed. Such discrepancies
can result in $\left[I\right]_{1}\!\left(t\right)<0$ when we transfer
more mass out of $(I)$ than is actually present.

This is the only way $\left[I\right]\!\left(t\right)$ can become
negative. That is to say, $\left[I\right]\!\left(t\right)$ is guaranteed
to be non-negative for all $\ge0$ for initial conditions for which
$\left[I\right]\!\left(0\right)\ge p\int_{0}^{\tau}\left[\rightarrow I\right]\negmedspace\left(-s,s\right)\textrm{d}s$. A similar analysis holds for $\left[Q\right]\!\left(t\right)$.
These results are recorded in Lemma 1 in Sec. 3.3.

\bibliographystyle{unsrt}
\bibliography{EpiNet, EpiNet-old}

\end{document}